\documentclass[sort&compress,preprint,11pt]{elsarticle}

\usepackage{amssymb}
\usepackage{amsmath,amsthm}
\usepackage{lineno,hyperref}
\usepackage{amsfonts}
\usepackage{amsmath}
\usepackage{rotating}
\usepackage{amssymb}
\usepackage{amscd,multirow}
\usepackage{graphicx}
\usepackage{epstopdf}
\usepackage[titletoc]{appendix}
\usepackage{subfigure}
\usepackage[justification=centering]{caption}
\usepackage{subeqnarray,cases}
\usepackage{braket,amsfonts}
\usepackage{cleveref}
\usepackage{mathtools}
\usepackage{array}
\usepackage{algpseudocode}%
\usepackage{listings}%
\usepackage[misc]{ifsym}
\usepackage[justification=centering]{caption}
\usepackage{subfigure,ragged2e}
\usepackage{multirow}%

\usepackage{caption}
\usepackage{subcaption}
\usepackage{subcaption,natbib}
\usepackage{pgfplots}
 
\usepackage{makecell}
\usepackage{algorithm}

\newtheorem{theorem}{Theorem}[section]

\newtheorem{lemma}{Lemma}[section]
\newtheorem{proposition}{Proposition}[section]
\newtheorem{remark}{Remark}[section]

\newtheorem{assumption}{Assumption}[section]
\allowdisplaybreaks[4]

\modulolinenumbers[5]

\usepackage[a4paper,margin=1in]{geometry}
\usepackage{amsmath,amssymb,amsthm,mathtools}
\usepackage{bm}
\usepackage{enumitem}

\newcommand{\R}{\mathbb{R}}
\newcommand{\ip}[2]{\left\langle #1,#2\right\rangle}
\newcommand{\norm}[1]{\left\|#1\right\|}

\newcommand{\La}{\mathcal{L}}
\newcommand{\E}{\mathcal{E}}

\newcommand{\bigO}{\mathcal{O}}

      \renewcommand{\thesection}{\arabic{section}}

\begin{document}

\begin{frontmatter}



\title{Trajectory convergence and  $o(t^{-2})$ rates for Nesterov accelerated primal-dual dynamics without  Lipschitz gradient assumption}


\author{Xin He} 

\affiliation{organization={School of Science, Xihua University},
            city={Chengdu},
            postcode={610039}, 
            state={Sichuan},
            country={China}}

\ead{hexinuser@163.com}

\author{Nan-Jing Huang}
\affiliation{organization={Department of Mathematics, Sichuan University},
            city={Chengdu},
            postcode={610064}, 
            state={Sichuan},
            country={China}}

\ead{njhuang@scu.edu.cn}

\author{Yi-Bin Xiao}
\affiliation{organization={Department of Mathematics, University of Electronic Science and Technology of China},
            city={Chengdu},
            postcode={611731}, 
            state={Sichuan},
            country={China}}
\ead{xiaoyb9999@hotmail.com}

\author{Ya-Ping Fang}
\affiliation{organization={Department of Mathematics, Sichuan University},
            city={Chengdu},
            postcode={610064}, 
            state={Sichuan},
            country={China}}
\ead{ypfang@scu.edu.cn}

%
%

\begin{abstract}
We consider the  Nesterov accelerated  primal-dual dynamical system
\[
\begin{cases}
\ddot{x}(t)+\dfrac{\alpha}{t}\dot{x}(t)
+\nabla f(x(t))
+A^\top\bigl(\lambda(t)+\theta t\dot{\lambda}(t)\bigr)+\beta A^\top(Ax(t)-b)=0,\\[0.6em]
\ddot{\lambda}(t)+\dfrac{\alpha}{t}\dot{\lambda}(t)
-\bigl(A(x(t)+\theta t\dot{x}(t))-b\bigr)=0,
\end{cases}
\]
which is linked to the linearly constrained optimization problem
$
    \min_{x\in\R^n} f(x),\  \text{\rm s.t. } Ax=b,
$
where  $\alpha\ge 3$ and $f$ is convex and continuously differentiable. In a Hilbert framework, the weak convergence of its trajectory was established by Bo\c{t} and Nguyen (J. Differential Equations, 303:369--406, 2021) under  $\alpha>3$ and the Lipschitz continuity assumption on $\nabla f$. In this paper, we prove in finite-dimensional spaces that the trajectory converges to a primal-dual solution for $\alpha\ge3$, without
assuming Lipschitz continuity of $\nabla f$. Moreover, when $\alpha>3$,
we establish improved $o(t^{-2})$ convergence rates for both the objective residual and
the feasibility violation. Our analysis relies on Bregman-distance arguments, instead of  the  Lipschitz continuity of  $\nabla f$.  The same strategy can also be extended
to time-scaled primal-dual dynamics to obtain analogous convergence results.
To the best of our knowledge, this is the first results in this topic without  Lipschitz gradient assumption. Our result also present the first work on the convergence of the trajectory of the accelerated primal-dual dynamical system for the critical case $\alpha=3$. 
\end{abstract}


\begin{keyword}
Linearly constrained convex optimization, Nesterov accelerated primal-dual dynamic, vanishing damping $\frac{\alpha}{t}$, trajectory convergence, $o(\frac{1}{t^{2}})$ convergence rate

\vspace{0.5em}
\MSC[]{37N40, 34D05, 46N10, 65K10, 90C25}


\end{keyword}

\end{frontmatter}

\section{Introduction}

\subsection{Problem setting}

Let $f:\R^n\to\R$ be a convex continuously differentiable function,
$A\in\R^{m\times n}$, and $b\in\R^m$. We consider the following linearly
constrained convex optimization problem in the finite-dimensional setting:
\begin{equation}\label{main_ques}
    \min_{x\in\R^n} f(x),\qquad \text{s.t. } Ax=b.
\end{equation}
Problem \eqref{main_ques} arises naturally in constrained
optimization, signal processing, machine learning, distributed optimization,
and many other areas where objective minimization is coupled with linear
conservation laws; see \cite{LinBook,BaiJSC2025,BaiJsc,Boyd}.

Recall the augmented Lagrangian function associated with \eqref{main_ques}:
\begin{equation*}\label{eq_lagrangian}
    \La_\beta(x,\lambda)
    =  f(x)+\langle \lambda,Ax-b\rangle+\frac{\beta}{2}\|Ax-b\|^2,
    \qquad \beta\ge0.
\end{equation*}
The quadratic penalty term in $\La_\beta$ provides direct control of the
feasibility residual. A pair $(x^*,\lambda^*)\in \R^n\times \R^m$
is called a saddle point of $\La_\beta$ if
\[
    \La_\beta(x^*,\lambda)\le \La_\beta(x^*,\lambda^*)\le \La_\beta(x,\lambda^*),
    \qquad
    \forall (x,\lambda)\in\R^n\times\R^m .
\]
By the optimality condition for \eqref{main_ques}, $(x^*,\lambda^*)$ also
satisfies
\begin{equation*}\label{eq:KKT}
    Ax^*=b,\qquad
    \nabla f(x^*)+A^\top\lambda^*=0
\end{equation*}
and it is also called a primal-dual solution of problem \eqref{main_ques}.

Throughout the paper, we denote the set of primal optimal solutions by
\begin{equation*}
    \mathcal S
    :=\{x^*\in\R^n\ |\ x^*\in 
    \operatorname*{argmin}\{f(x): Ax=b\}\}.
\end{equation*}
We also denote the primal-dual solution set by
\begin{equation*}\label{eq:saddle-set}
    \Omega
    :=
    \left\{(x^*,\lambda^*)\in\R^n\times\R^m:
    Ax^*=b,\ \nabla f(x^*)+A^\top\lambda^*=0\right\}.
\end{equation*}
We assume that $\mathcal S\neq\emptyset$, and so  $\Omega\ne\emptyset$ since  for every
$x^*\in \mathcal S$, there exists $\lambda^*\in\R^m$ such that
$(x^*,\lambda^*)\in \Omega$.  In general, the primal-dual solution set $\Omega$ need
not be a singleton. Nevertheless, if $(x_1^*,\lambda_1^*)$ and
$(x_2^*,\lambda_2^*)$ belong to $\Omega$, then
\[
    \nabla f(x_1^*)=\nabla f(x_2^*),
    \qquad
    A^\top\lambda_1^*=A^\top\lambda_2^*.
\]
See Proposition \ref{prop_App1} in Appendix \ref{AppA}. This elementary structural
property will be useful in the trajectory convergence analysis.

 \subsection{Inertial gradient dynamics with vanishing damping}

Continuous-time dynamical systems provide an effective viewpoint for
understanding acceleration in convex optimization. For  the unconstrained convex
minimization
\[
    \min_{x\in\mathbb R^n} f(x),
\]
one of the most important associated  models is the inertial gradient dynamic with
asymptotically vanishing damping
\[
  \text{\rm (AVD)}_{\alpha}\qquad
  \ddot x(t)+\frac{\alpha}{t}\dot x(t)+\nabla f(x(t))=0.
\]
Su, Boyd, and Cand\`es \cite{SuJMLR} showed that
$\text{\rm (AVD)}_{\alpha}$ provides a continuous-time interpretation of
Nesterov acceleration \cite{NesterovSMD}. In particular,  this dynamic yields the continuous-time counterpart of the
classical accelerated rate
$
    f(x(t))-\min f=\bigO\left({t^{-2}}\right)
$ when $\alpha\ge 3$. May \cite{MayTJM} further showed that the
$\bigO(t^{-2})$ estimate for the objective residual can be improved to
$o(t^{-2})$ when $\alpha>3$. Concerning trajectory convergence, Attouch et al.
\cite{AttouchMP} established the weak convergence of the trajectory in a Hilbert
space for the noncritical regime $\alpha>3$.  For  the critical case $\alpha=3$, the question of the weak  convergence of the trajectory to $\text{\rm (AVD)}_{\alpha}$ in a  Hilbert space remains open in a long time until Jang and Ryu \cite{Jang25} proved the global trajectory convergence
in a finite-dimensional space setting. For the discrete algorithmic counterparts of
$\text{\rm (AVD)}_{\alpha}$ (the  Nesterov accelerated algorithm), the weak convergence of the iterates can be found in Attouch et al. \cite{AttouchMP} for the noncritical case $\alpha>3$   and in Bo\c{t} et al. \cite{BotFNArx}, Jang and Ryu \cite{Jang25}  for the critical case $\alpha=3$. Further developments of
$\text{\rm (AVD)}_{\alpha}$ include time-rescaling techniques,
Hessian-driven damping, Tikhonov regularization and time discretization, see \cite{BotFocm,WibisonoPNAS,LuoMp,BotTik,AttouchJEMS,Attouchk2,Aujol,AlecsaSiopt,He2026coa}.

For the linearly constrained convex problem \eqref{main_ques}, the
unconstrained $\text{\rm (AVD)}_{\alpha}$ theory provides a useful intuition,
but it cannot be transferred directly to the primal-dual setting. A
primal-dual dynamic must simultaneously control the objective residual, the
feasibility violation, and the dual multiplier \cite{
GuoTnnls,LuoEsiam,BotPDJDE,ZengSicon}. This leads to coupling terms
between the primal and dual variables, which enforces the constraint $Ax=b$,
but makes the convergence analysis of the full trajectory more delicate.
Several inertial primal-dual systems with vanishing damping have been
developed for the linearly constrained  problem \eqref{main_ques}. Using the augmented Lagrangian $\La_\beta$, Zeng et al. \cite{ZengTAC}
introduced the Nesterov-type dynamic
\begin{equation}\label{dy_main}
\begin{cases}
\ddot{x}(t)+\dfrac{\alpha}{t}\dot{x}(t)
+\nabla f(x(t))
+A^\top\bigl(\lambda(t)+\theta t\dot{\lambda}(t)\bigr)
+\beta A^\top(Ax(t)-b)=0,\\[0.7em]
\ddot{\lambda}(t)+\dfrac{\alpha}{t}\dot{\lambda}(t)
-\bigl(A(x(t)+\theta t\dot{x}(t))-b\bigr)=0.
\end{cases}
\end{equation}
For $\alpha\ge 3$ and $\theta=1/2$, they proved the
$\bigO(t^{-2})$ decay rate of the augmented Lagrangian residual
$\La_{\beta}(x(t),\lambda^*)-\La_{\beta}(x^*,\lambda^*)$. This estimate
further implies $\bigO(t^{-1})$ convergence rates for the objective residual
$|f(x(t))-f(x^*)|$ and the feasibility violation $\|Ax(t)-b\|$. He et al.
\cite{HeSICON} and Attouch et al. \cite{AttouchJOTA} extended this line of
research to inertial primal-dual dynamics with more general damping and
scaling coefficients for separable convex optimization problems.  The   accelerated $\bigO(t^{-2})$ rates for  the objective residual and the feasibility violation can be found in   He et al. \cite{HeAA} and Bo\c{t} and Nguyen \cite{BotJDE}. Further developments
include inertial primal-dual dynamics with implicit and explicit
Hessian-driven damping \cite{HeOpt,LiCNSN},  mirror primal-dual
dynamics for distributed constrained optimization \cite{ZhaoJMLR},
``second-order primal'' + ``first-order dual'' dynamics \cite{HeAutomatica,SunJota,Battahi}, and time discretization of primal-dual
dynamical systems \cite{BotALM,ZhangAmo,BotArxiv,LuoMC}.  These works provide useful analytical tools
for studying accelerated primal-dual dynamics under linear constraints.

 \subsection{Main ideas and contributions}

Most of the above results of Nesterov accelerated primal-dual dynamics for \eqref{main_ques} focus primarily on convergence rates for the
objective residual and  the feasibility violation  under $\alpha\ge 3$, the existing estimates yield
$\bigO(t^{-2})$ decay rates. By contrast, in the unconstrained case, the
dynamic $\text{\rm (AVD)}_{\alpha}$ admits the improved $o(t^{-2})$ decay
rate when $\alpha>3$ (see\cite{MayTJM,BotTik}). Thus, compared with the
unconstrained theory, the existing convergence results for \eqref{dy_main}
do not fully capture the   asymptotic behavior suggested by the
vanishing damping $\alpha/t$. The trajectory convergence of primal-dual dynamics is more subtle and was
not addressed in many earlier works. A major contribution in this direction
is due to Bo\c{t} and Nguyen \cite{BotJDE} who proved the weak convergence of the trajectory of the dynamic
\eqref{dy_main} in
Hilbert spaces under the Lipschitz continuity of $\nabla f$ and the
noncritical damping condition $\alpha>3$. The trajectory convergence results were further  extended to a time-rescaled version
of \eqref{dy_main} by Hulett and Nguyen \cite{HulettAMO} under the same
Lipschitz gradient assumption and $\alpha>3$.

When $\nabla f$ is Lipschitz continuous, one can use the cocoercivity
inequality to control the gradient residual. This is a key step in Bo\c{t} and
Nguyen \cite{BotJDE} in proving
the asymptotic stationarity
\begin{equation}\label{eq_saddle}
	 \nabla f(x(t))+A^\top\lambda(t)\to0.
\end{equation}
However, in unconstrained optimization, the trajectory convergence of
$\text{\rm (AVD)}_{\alpha}$ can be obtained without assuming Lipschitz
continuity of $\nabla f$; see \cite{AttouchMP,Jang25}. This suggests that
Lipschitz smoothness may not be essential to the continuous-time acceleration
mechanism. The main difficulty is whether such a non-Lipschitz analysis can
be extended from inertial dynamics for unconstrained convex minimization to
the primal-dual dynamic \eqref{dy_main} for the linearly constrained problem \eqref{main_ques}.

The above discussion shows that several basic questions  which were not addressed  in the pioneering work of Bo\c{t} and Nguyen \cite{BotJDE}  remain open for the primal-dual dynamic \eqref{dy_main}. 
\[
\boxed{
\begin{minipage}{0.95\textwidth}
\indent\par
(i) Can the weak convergence of the trajectory  of \eqref{dy_main} in a Hilbert framework be guaranteed for the noncritical damping condition $\alpha=3$?

\indent\par (ii) Can the  $\bigO(t^{-2})$ decay  for the objective residual and the feasibility violation  be improved to
$o(t^{-2})$ for  Nesterov-type accelerated primal-dual dynamics when $\alpha>3$?

\indent\par
 (iii) Existing convergence results for inertial primal-dual dynamics in the literature always require the Lipschitz  gradient assumption. Whether do  these convergence properties still hold without Lipschitz  gradient assumption?
\end{minipage}
}
\]

In this paper we shall give  affirmative answers to these problems  in a finite-dimensional space setting. Throughout,
we assume only that $f$ is convex and continuously differentiable, and 
$\nabla f$ is not required to be Lipschitz continuous.
The convergence analysis presented in this paper relies on a Bregman-distance
argument  instead of the standard cocoercivity argument used in existing works. For a fixed primal solution $x^*\in\mathcal S$, we use
\begin{equation}\label{eq_Df}
	D_f(x^*,x)
=
f(x^*)-f(x)
-\left\langle \nabla f(x),x^*-x\right\rangle,\quad \forall x\in \R^n.
\end{equation}
By the convexity of $f$, $D_f(x^*,x)\ge0$. The Lyapunov structure of
\eqref{dy_main} yields the weighted integrability estimate
\[
    \int_{t_0}^{+\infty} tD_f(x^*,x(t))\,dt<+\infty.
\]
Together with the velocity estimate $\|\dot{x}(t)\|=\bigO(1/t)$ and the boundedness of trajectories $x(t)$ in the finite-dimensional space, this implies
$
    D_f(x^*,x(t))\to0
$
as $t\to\infty$. This is a key step in establishing \eqref{eq_saddle}
without assuming Lipschitz continuity of $\nabla f$.

The main contributions of this paper are summarized as follows.

\begin{itemize}
    \item In a finite-dimensional space setting, we prove  the trajectory convergence of the Nesterov accelerated  primal-dual dynamic
\eqref{dy_main}  to a primal-dual solution of the  problem \eqref{main_ques} under the 
assumption that $f$ is continuously differentiable and $\alpha\ge 3$. Compared with
the result of Bo\c{t} and Nguyen \cite{BotJDE}, where weak convergence was
obtained under the Lipschitz continuity of $\nabla f$ and the noncritical
condition $\alpha>3$, our result removes the Lipschitz gradient assumption
and covers the critical damping case $\alpha=3$. To the best of our
knowledge, this is the first trajectory convergence result for Nesterov accelerated primal-dual dynamical systems in the critical case $\alpha=3$. The proof does
not rely on the cocoercivity-based gradient residual estimate or Opial's
lemma. Instead, it is based on a Bregman-distance argument and on the
existence and uniqueness of cluster points of the trajectory. In the
unconstrained case, this result recovers the corresponding trajectory
convergence properties of the classical $\text{\rm (AVD)}_{\alpha}$ dynamics
\cite{AttouchMP,Jang25}.

    \item Without Lipschitz gradient assumption, we establish the improved $o(t^{-2})$ convergence rates for both the
    objective residual and the feasibility violation in the noncritical
    regime $\alpha>3$. This improves the existing $\bigO(t^{-2})$ estimates
    for Nesterov-type primal-dual dynamical systems
    \cite{ZengTAC,HeSICON,ZhaoJMLR,AttouchJOTA,BotJDE}. When $A=0$ and $b=0$, our result recovers the existing $o(t^{-2})$ convergence rates for the unconstrained $\text{\rm (AVD)}_{\alpha}$ dynamic \cite{MayTJM}. Furthermore, the  Bregman-distance strategy is adapted to time-scaled Nesterov primal-dual dynamical system considered in
    \cite{HulettAMO,HeAA}, yielding analogous trajectory convergence results
    and improved convergence rates under the corresponding parameter
    conditions.
\end{itemize}

\subsection{Organization}

The rest of the paper is organized as follows. Section~\ref{sec2}
establishes some basic estimates and proves the convergence of the trajectory
to a primal-dual solution of problem \eqref{main_ques}. Section~\ref{sec3} proves the improved $o(t^{-2})$ rates
for the objective residual and the feasibility violation. Section~\ref{sec4}
discusses extensions to time-scaled Nesterov primal-dual dynamical systems.
Section~\ref{sec5} concludes the paper. Some useful auxiliary lemmas are
provided in Appendix \ref{AppA}.

\section{Estimates and trajectory convergence}\label{sec2}

In this section, we investigate the convergence of the trajectory generated
by the dynamic \eqref{dy_main}. Throughout this section, the parameters are
assumed to satisfy the following condition.

\begin{center}
\vspace{0.5em}
\setlength{\fboxsep}{6pt}
\setlength{\fboxrule}{0.4pt}
\fbox{%
\begin{minipage}{0.96\textwidth}
\begin{assumption}\label{ass1}
The parameters $\alpha$, $\beta$, and $\theta$ in the dynamic \eqref{dy_main} satisfy
\[
\alpha\ge 3, \qquad
\theta\in\left[\frac{1}{\alpha-1},\frac12\right],\qquad
\beta\ge 0.
\]
\end{assumption}
\end{minipage}%
}
\vspace{0.5em}
\end{center}

Under Assumption \ref{ass1}, for any initial point
\[(x(t_0),\lambda(t_0),\dot x(t_0),\dot\lambda(t_0))
=(x_0,\lambda_0,\dot x_0,\dot\lambda_0),\]
 the existence and uniqueness of a
global solution to the dynamic \eqref{dy_main} has been investigated in
\cite{HeSICON,AttouchJOTA,BotJDE}. Throughout this paper, we assume that
\eqref{dy_main} admits a global solution $(x(t),\lambda(t))$.

To study trajectory convergence, we fix
$z^*=(x^*,\lambda^*)\in \Omega$ and recall the following energy function
$\E_{z^*}:[t_0,+\infty)\to [0,+\infty)$:
\begin{equation}\label{Edef}
    \E_{z^*}(t)=\theta^2t^2(\La_\beta(x(t),\lambda^*)- \La_\beta(x^*,\lambda(t)))+\frac{1}{2}\norm{v(t)}^2+\frac{\xi}{2}\norm{z(t)-z^*}^2,
\end{equation}
where $\xi:=\theta\alpha-\theta-1\ge0$ and
\[
    v(t)=z(t)-z^*+\theta t\dot z(t),
    \qquad
    z(t)=(x(t),\lambda(t)).
\]
The energy function \eqref{Edef} was used in \cite{BotJDE} for the
convergence analysis of \eqref{dy_main}. Based on this energy, we first
establish several estimates that will be used later in the trajectory
convergence analysis. Some related results can also be found in
\cite{BotJDE,HeAA,ZengTAC,HeSICON}.

\begin{theorem}\label{th_basic}
Let $(x(t),\lambda(t))$ be a trajectory of the dynamic \eqref{dy_main}, and
 $(x^*,\lambda^*)\in\Omega$. Then the following conclusions hold:
\begin{itemize}
	\item[$(i)$] Integral estimates:
	\begin{align*}
&(1-2\theta)\int_{t_0}^{+\infty} t(\La_\beta(x(t),\lambda^*)- \La_\beta(x^*,\lambda(t)))\,dt<+\infty, \\
&(\theta\alpha-\theta-1)\int_{t_0}^{+\infty} t\norm{(\dot x(t),\dot \lambda(t))}^2\,dt<+\infty, \\
&\int_{t_0}^{+\infty} tD_f(x^*,x(t))\,dt<+\infty.
\end{align*}
	\item[$(ii)$] The trajectory $(x(t),\lambda(t))$ is bounded on
    $[t_0,+\infty)$, and the velocity satisfies
\[
        \norm{(\dot x(t), \dot \lambda(t))}=\bigO\left(\frac1t\right).
\]
	\item[$(iii)$] The objective residual and the feasibility violation satisfy
	\[
        \|Ax(t)-b\|=\bigO\left(\frac1{t^2}\right)
        \quad \text{and} \quad
        |f(x(t))-f(x^*)|=\bigO\left(\frac1{t^2}\right).
    \]
\end{itemize}
\end{theorem}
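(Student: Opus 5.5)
We will differentiate the energy $\E_{z^*}$ from \eqref{Edef} along trajectories of \eqref{dy_main} and show that
\[
\dot\E_{z^*}(t)\le -(1-2\theta)\theta t\,\bigl(\La_\beta(x,\lambda^*)-\La_\beta(x^*,\lambda)\bigr)-\xi\theta t\norm{\dot z}^2-\theta t\,D_f(x^*,x)-\tfrac{\beta\theta t}{2}\norm{Ax-b}^2 .
\]
\emph{Step 1: the derivative of $\E_{z^*}$.} Write $G(z)=(\nabla f(x)+A^\top\lambda+\beta A^\top(Ax-b),\,-(Ax-b))$. Then \eqref{dy_main} reads
\[
\ddot z+\tfrac{\alpha}{t}\dot z+G(z)+\theta t\,(A^\top\dot\lambda,\,-A\dot x)=0 .
\]
Compute $\dot v=(1+\theta)\dot z+\theta t\ddot z=(1+\theta-\theta\alpha)\dot z-\theta t G(z)-\theta^2t^2(A^\top\dot\lambda,-A\dot x)$. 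Taking the inner product with $v=z-z^*+\theta t\dot z$, two cancellations occur. First, the skew term pairs with $\theta t\dot z$ to give zero. Second, the cross terms $\xi\ip{z-z^*}{\dot z}$ cancel against $\frac{d}{dt}\frac{\xi}{2}\norm{z-z^*}^2$, since $1+\theta-\theta\alpha=-\xi$. What remains are $-\xi\theta t\norm{\dot z}^2$, the term $-\theta t\ip{G(z)}{z-z^*}$, the term $-\theta^2t^2\ip{G(z)}{\dot z}$, and the skew piece $-\theta^2t^2\ip{(A^\top\dot\lambda,-A\dot x)}{z-z^*}$.

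The derivative of the gap term is $2\theta^2t(\La_\beta(x,\lambda^*)-\La_\beta(x^*,\lambda))+\theta^2t^2\bigl(\ip{\nabla_x\La_\beta(x,\lambda^*)}{\dot x}-\ip{Ax^*-b}{\dot\lambda}\bigr)$. Using $Ax^*=b$, the $t^2$-terms should exactly cancel $-\theta^2t^2\ip{G(z)}{\dot z}$ together with the skew piece.

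For the term $\ip{G(z)}{z-z^*}$, use $\nabla f(x^*)=-A^\top\lambda^*$ to obtain the identity
\[
\ip{G(z)}{z-z^*}=\La_\beta(x,\lambda^*)-\La_\beta(x^*,\lambda)+D_f(x^*,x)+\tfrac{\beta}{2}\norm{Ax-b}^2 ,
\]
where $D_f$ is as in \eqref{eq_Df}. Collecting everything yields the claimed inequality, with coefficient $2\theta^2-\theta=-\theta(1-2\theta)\le0$ in front of the gap. The main obstacle is this bookkeeping: verifying the exact cancellation of the $t^2$ terms and the appearance of the Bregman term. If the constants don't match, I would adjust by using the identity $\La_\beta(x,\lambda^*)-\La_\beta(x^*,\lambda^*)=D_f(x,x^*)+\frac{\beta}{2}\norm{Ax-b}^2$.

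\emph{Step 2: integrate.} All terms on the right are nonpositive under Assumption \ref{ass1}. Indeed, the gap is nonnegative because $(x^*,\lambda^*)$ is a saddle point and $\La_\beta(x^*,\lambda)=f(x^*)$. Hence $\E_{z^*}$ is nonincreasing and nonnegative, and integrating gives (i) directly.

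\emph{Step 3: boundedness and velocity.} For (ii), boundedness of $\E_{z^*}$ gives $\norm{v}$ bounded, i.e.\ $\norm{(z-z^*)+\theta t\dot z}\le C$. Setting $w=z-z^*$, this means $\frac{d}{dt}(t^{1/\theta}w)=\frac{1}{\theta}t^{1/\theta-1}v$. Integrating gives $\norm{w(t)}\le t_0^{1/\theta}\norm{w(t_0)}t^{-1/\theta}+C$, so $z$ is bounded. Then $\theta t\norm{\dot z}\le \norm{v}+\norm{w}$, which gives $\norm{\dot z}=\bigO(1/t)$. This step works even when $\xi=0$.

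\emph{Step 4: the rates in (iii).} The energy bound gives $\La_\beta(x,\lambda^*)-f(x^*)=\bigO(t^{-2})$, and hence $f(x)-f(x^*)+\ip{\lambda^*}{Ax-b}=\bigO(t^{-2})$. The $\bigO(t^{-2})$ feasibility bound follows the standard route of \cite{HeAA,BotJDE}. From the $\lambda$-equation, $\frac{d}{dt}\bigl(t^{\alpha}\dot\lambda\bigr)=t^{\alpha}(Ax-b)+\theta t^{\alpha+1}A\dot x$. Using integration by parts on the $A\dot x$ term, write
\[
t^{\alpha}\dot\lambda+\text{const}=\theta t^{\alpha+1}(Ax-b)+(1-\theta(\alpha+1))\int t^{\alpha}(Ax-b).
\]
Since $\dot\lambda=\bigO(1/t)$, this expresses $Ax-b$ through $\dot\lambda$. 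The cleaner route is to define $u(t)=t^{1/\theta}(Ax(t)-b)$ and show $\frac{d}{dt}$ of a suitable combination is $\bigO(t^{1/\theta-3})$ via \cite[Lemma]{BotJDE}-type arguments, which yields $\norm{Ax-b}=\bigO(t^{-2})$. Then
\[
|f(x)-f(x^*)|\le \bigl|\La_\beta(x,\lambda^*)-f(x^*)\bigr|+\norm{\lambda^*}\norm{Ax-b}+\tfrac{\beta}{2}\norm{Ax-b}^2=\bigO(t^{-2}),
\]
using also the lower bound $f(x)-f(x^*)\ge-\ip{\lambda^*}{Ax-b}$.
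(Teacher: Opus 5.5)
Steps 1--3 are correct and follow the paper. Where the paper quotes the derivative of $\E_{z^*}$ from \cite[Lemma 3.1]{BotJDE}, you compute it directly, and your inequality is in fact an identity. The Bregman decomposition of $\ip{G(z)}{z-z^*}$ is the same as the paper's, and your integrating-factor argument is Lemma \ref{le_bounde}.

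The gap is in Step 4, at the endpoint $\theta=\frac12$, which Assumption \ref{ass1} permits. Your mechanism uses only $\norm{\dot\lambda(t)}=\bigO(1/t)$. With $p=1/\theta$ and $r=Ax-b$, the $\lambda$-equation gives
\[
t^pr(t)-t_0^pr(t_0)=p\bigl[s^{p-1}\dot\lambda(s)\bigr]_{t_0}^{t}-p(p-1-\alpha)\int_{t_0}^{t}s^{p-2}\dot\lambda(s)\,ds .
\]
Bounding the integrand by $Cs^{p-3}$ is exactly your ``derivative $\bigO(t^{1/\theta-3})$''. For $p>2$ this integrates to $\bigO(t^{p-2})$. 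For $p=2$, however, the coefficient $p(p-1-\alpha)=2(1-\alpha)$ does not vanish and the integral is $\bigO(\log t)$, so you only get $\norm{Ax(t)-b}=\bigO(\log t/t^2)$. Sharper bookkeeping cannot fix this with that input alone. The function $r(t)=c\log t/t^2$ satisfies $\frac{d}{dt}(t^\alpha\dot\lambda)=t^\alpha(r+\frac12t\dot r)$ with $\dot\lambda(t)=\frac{c}{2(\alpha-1)t}+\bigO(t^{-\alpha})$, which is $\bigO(1/t)$; in that case $\lambda$ is unbounded.

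The missing ingredient is the boundedness of $\lambda(t)$ itself, which you already have from Step 3. Integrate by parts once more:
\[
\int_{t_0}^{t}s^{p-2}\dot\lambda(s)\,ds=\bigl[s^{p-2}\lambda(s)\bigr]_{t_0}^{t}-(p-2)\int_{t_0}^{t}s^{p-3}\lambda(s)\,ds .
\]
This is $\bigO(t^{p-2})$ for every $p\ge2$; for $p=2$ it is just $\lambda(t)-\lambda(t_0)$.

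Equivalently, the $\lambda$-equation can be written as $\theta t(r+\theta t\dot r)=\frac{d}{dt}\bigl(v_\lambda+\xi\lambda\bigr)$, where $v_\lambda=\lambda-\lambda^*+\theta t\dot\lambda$ is bounded by the energy. For $\theta=\frac12$ this gives $\frac14t^2r(t)=v_\lambda(t)+\xi\lambda(t)+\mathrm{const}$ directly.

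With this repair, $\norm{Ax(t)-b}=\bigO(t^{-2})$ holds on the whole range $\theta\in[\frac1{\alpha-1},\frac12]$. Your estimate of $|f(x(t))-f(x^*)|$ then goes through unchanged. Note that the paper does not spell out (iii) either; it defers to \cite[Theorem 3.4]{BotJDE}.
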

\begin{proof}
$(i)$ Denote $z(t)=(x(t),\lambda(t))$ and
$z^*=(x^*,\lambda^*)\in\Omega$. Let $\E_{z^*}(t)$ be the energy function
defined in \eqref{Edef}. By arguments similar to those in
\cite[Lemma 3.1]{BotJDE}, we obtain
\begin{eqnarray}\label{eq_dote}
&& \dot \E_{z^*}(t)
=2\theta^2t(\La_\beta(x(t),\lambda^*)- \La_\beta(x^*,\lambda(t)))
-\xi\theta t\norm{\dot z(t)}^2 \\
&&\qquad-\theta t\left(
\ip{\nabla f(x(t))+A^\top\lambda^*}{x(t)-x^*}
+\beta\langle A^\top(Ax(t)-b),x(t)-x^*\rangle
\right).\nonumber
\end{eqnarray}
Since $Ax^*=b$, we have
\begin{eqnarray*}
&& \ip{\nabla f(x(t))+A^\top\lambda^*}{x(t)-x^*}
+\beta\langle A^\top(Ax(t)-b),x(t)-x^*\rangle\\
&&\quad =  \ip{\nabla f(x(t))}{x(t)-x^*}
+ \ip{ \lambda^*}{Ax(t)-b}
+\beta\|Ax(t)-b\|^2\\
&&\quad =
D_f(x^*,x(t))
+\bigl(\La_\beta(x(t),\lambda^*)- \La_\beta(x^*,\lambda(t))\bigr)
+\frac{\beta}{2}\|Ax(t)-b\|^2,
\end{eqnarray*}
where $D_f(x^*,x)$ is the Bregman-distance function defined in
\eqref{eq_Df}. Combining this identity with \eqref{eq_dote}, we obtain
\begin{equation}\label{eq_Eine}
 \dot \E_{z^*}(t)
\leq
(2\theta-1)\theta t
\bigl(\La_\beta(x(t),\lambda^*)- \La_\beta(x^*,\lambda(t))\bigr)
-\xi\theta t\norm{\dot z(t)}^2
-\theta t D_f(x^*,x(t)).
\end{equation}
Here we have dropped the additional nonpositive term
$-\frac{\beta\theta}{2}t\|Ax(t)-b\|^2$.

Under Assumption \ref{ass1}, we have $2\theta-1\leq0$ and $\xi\geq0$.
Moreover, by the saddle point property,
\[
    \La_\beta(x(t),\lambda^*)- \La_\beta(x^*,\lambda(t))\ge0,
\]
and by the convexity of $f$, $D_f(x^*,x(t))\ge0$. Hence
$\dot \E_{z^*}(t)\leq0$. Since $\E_{z^*}(t)\geq0$, the function
$\E_{z^*}$ is nonincreasing and nonnegative. In particular,
\[
    \sup_{t\geq t_0}\E_{z^*}(t)<+\infty.
\]
Integrating \eqref{eq_Eine} over $[t_0,+\infty)$ yields the estimates in
$(i)$.

$(ii)$ Since $\E_{z^*}$ is nonincreasing, we have
$\E_{z^*}(t)\le \E_{z^*}(t_0)$ for all $t\ge t_0$. It follows from
\eqref{Edef} that
\begin{equation*}
 \norm{z(t)-z^*+\theta t\dot z(t)}^2\le 2\E_{z^*}(t_0).
\end{equation*}
Applying Lemma \ref{le_bounde} with $\bar z=z^*$ and $g(t)=\theta t$, we conclude
that $z(t)$ is bounded on $[t_0,+\infty)$ and that
\[
    \sup_{t\geq t_0}\left\|t\dot{z}(t)\right\|<+\infty.
\]
This proves $(ii)$.

$(iii)$ The proof of $(iii)$ follows from the same arguments as in
\cite[Theorem 3.4]{BotJDE}.
\end{proof}

\begin{remark}
Under the parameter condition $\alpha\ge 3$ and
$\theta\in[1/(\alpha-1),1/2]$, Theorem~\ref{th_basic} gives the boundedness
of  trajectory and the velocity estimate
$\|(\dot x(t),\dot\lambda(t))\|=\bigO(t^{-1})$. This slightly improves the
corresponding trajectory estimates in \cite{BotJDE}, which were obtained
under the noncritical condition $\alpha>3$ and
$\theta\in(1/(\alpha-1),1/2]$.
\end{remark}

\begin{remark}
A key point of Theorem~\ref{th_basic} is the additional weighted
integrability estimate
\[
    \int_{t_0}^{+\infty} tD_f(x^*,x(t))\,dt<+\infty.
\]
This estimate is not a direct consequence of the usual primal-dual gap
bound. It will be crucial in the subsequent trajectory convergence analysis,
since it allows us to replace the cocoercivity-based gradient estimate used
in the Lipschitz-gradient setting.
\end{remark}

We next prove the trajectory convergence of the dynamic \eqref{dy_main}. The main
difficulty is to identify the limiting stationarity relation without using
the Lipschitz continuity of $\nabla f$. Our proof proceeds in three steps.
First, the weighted integrability estimate
$\int_{t_0}^{+\infty}tD_f(x^*,x(t))\,dt<+\infty$ is converted into the
pointwise convergence of the Bregman distance. Second, this convergence is
used to prove the convergence of $\nabla f(x(t))$ along the primal
trajectory. Third, the first equation of \eqref{dy_main} is regarded as a
linear differential equation for the dual image
$A^\top(\lambda(t)-\lambda^*)$. These ingredients allow us to identify all
cluster points of the trajectory as primal-dual solutions and then prove the
convergence of the whole trajectory.

\begin{lemma}\label{lem:BregmanToZero}
Let $x(t)$ be the primal trajectory of the dynamic \eqref{dy_main}. Fix
$x^*\in\mathcal S$ and denote $\Phi(x)=D_f(x^*,x)$, where
$D_f(x^*,x)$ is defined in \eqref{eq_Df}. Then
\[
    \Phi(x(t))\to0, \quad\text{as } t\to+\infty.
\]
\end{lemma}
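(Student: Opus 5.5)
The plan is to combine three ingredients from Theorem~\ref{th_basic}: the weighted integrability $\int_{t_0}^{+\infty}t\,\Phi(x(t))\,dt<+\infty$ from part $(i)$, and the boundedness of $x(t)$ together with $\|\dot x(t)\|=\bigO(1/t)$ from part $(ii)$. The idea is a Barbalat-type argument adapted to the weight $t$. A plain Lipschitz-in-time estimate is not available, since $\nabla f$ is not assumed Lipschitz. Instead, we use that $\Phi$ is continuous on a compact set, so it is uniformly continuous there. The logarithmic time scale $s=\ln t$ makes the velocity bound uniform.

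\textbf{Step 1: compactness and uniform continuity.} Let $K$ be a compact convex set, for instance a closed ball, containing $\{x(t):t\ge t_0\}$; it exists by Theorem~\ref{th_basic}$(ii)$. Since $f\in C^1$, the map $\Phi(x)=f(x^*)-f(x)-\langle\nabla f(x),x^*-x\rangle$ is continuous, and hence uniformly continuous on $K$. Let $\omega$ denote a modulus of continuity of $\Phi$ on $K$, and let $C$ satisfy $\|\dot x(t)\|\le C/t$. For $t\le s\le 2t$ we then get
\[
\|x(s)-x(t)\|\le \int_t^{s}\frac{C}{r}\,dr\le C\ln 2 ,
\]
which does not tend to zero. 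So the window must shrink. For $s\in[t,(1+\delta)t]$ we have $\|x(s)-x(t)\|\le C\ln(1+\delta)\le C\delta$. It follows that
\[
|\Phi(x(s))-\Phi(x(t))|\le\omega(C\delta) \quad \text{for all } s\in[t,(1+\delta)t].
\]

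\textbf{Step 2: argue by contradiction.} Suppose $\limsup_{t\to\infty}\Phi(x(t))>2\varepsilon>0$. Then there is a sequence $t_k\to\infty$ with $t_{k+1}\ge 2t_k$ and $\Phi(x(t_k))\ge 2\varepsilon$. Choose $\delta\in(0,1)$ so that $\omega(C\delta)\le\varepsilon$. Then $\Phi(x(s))\ge\varepsilon$ on every interval $[t_k,(1+\delta)t_k]$, and these intervals are pairwise disjoint. Consequently,
\[
\int_{t_k}^{(1+\delta)t_k} s\,\Phi(x(s))\,ds\ \ge\ \varepsilon\, t_k\cdot\delta t_k=\varepsilon\delta t_k^2 .
\]
Summing over $k$ gives a divergent series, which contradicts Theorem~\ref{th_basic}$(i)$. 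Since $\Phi\ge0$ by convexity, we conclude that $\Phi(x(t))\to0$.

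\textbf{Main obstacle.} The only delicate point is Step 1. We need a smallness estimate for $|\Phi(x(s))-\Phi(x(t))|$ without any Lipschitz property of $\nabla f$. The multiplicative window $[t,(1+\delta)t]$ and the uniform continuity of $\Phi$ on a compact set handle this. This is exactly where finite dimension is used, because bounded sets are relatively compact. The weight $t$ in the integral is more than enough: even the unweighted integrability $\int\Phi/t<\infty$ would suffice with this choice of windows.
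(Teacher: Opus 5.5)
Your proof is correct and is essentially the paper's argument. Both use uniform continuity of $\Phi$ on a compact set containing the trajectory and the bound $\|\dot x(t)\|\le C/t$ on a multiplicative window $[t_k,(1+\delta)t_k]$, and both reach a contradiction with $\int_{t_0}^{+\infty}t\Phi(x(t))\,dt<+\infty$. The only difference is that you sum over disjoint windows, while the paper notes that a single window already gives a lower bound of order $t_k^2\to+\infty$; your version buys the extra robustness you mention, namely that it works under the weaker integrability $\int_{t_0}^{+\infty}\Phi(x(t))/t\,dt<+\infty$.
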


\begin{proof}
By Theorem \ref{th_basic}, $x(t)$ is bounded on $[t_0,+\infty)$. Hence
$
    K:=\overline{\{x(t):t\ge t_0\}}
$
is compact. Since $f$ is continuously differentiable, $\Phi$ is continuous.
Therefore, $\Phi$ is uniformly continuous on $K$. By Theorem \ref{th_basic}, we also have
\begin{equation}\label{BregIntAgain}
    \int_{t_0}^{+\infty}t\Phi(x(t))\,dt<+\infty,
\end{equation}
and there exists $C>0$ such that
\begin{equation}\label{xDotBound}
    \norm{\dot x(t)}\le \frac{C}{t},\qquad \forall t\ge t_0.
\end{equation}
We prove the result by contradiction. Suppose that
$\Phi(x(t))$ does not converge to $0$ as $t\to+\infty$. Since
$\Phi(x(t))\ge0$, there exist $\varepsilon_0>0$ and a sequence
$t_k\to+\infty$ such that
\begin{equation}\label{spike}
    \Phi(x(t_k))\ge \varepsilon_0,\qquad \forall k\ge1.
\end{equation}
By the uniform continuity of $\Phi$ on $K$, there exists $\delta_0>0$ such
that, for all $u,v\in K$,
\begin{equation}\label{UC}
    \norm{u-v}\le\delta_0
    \quad \Longrightarrow\quad
    |\Phi(u)-\Phi(v)|\le\frac{\varepsilon_0}{2}.
\end{equation}
Choose $\rho>0$ sufficiently small such that $C\ln(1+\rho)\le \delta_0$.
For every $s\in[t_k,(1+\rho)t_k]$, \eqref{xDotBound} gives
\[
\norm{x(s)-x(t_k)}
\le \int_{t_k}^{s}\norm{\dot x(\tau)}\,d\tau
\le C\int_{t_k}^{(1+\rho)t_k}\frac{d\tau}{\tau}
=C\ln(1+\rho)
\le\delta_0.
\]
Therefore, by \eqref{UC} and \eqref{spike},
\[
    \Phi(x(s))\ge\Phi(x(t_k))-|\Phi(x(t_k))-\Phi(x(s))|
    \ge \frac{\varepsilon_0}{2},
    \qquad \forall s\in[t_k,(1+\rho)t_k].
\]
Consequently,
\begin{equation*}
\begin{aligned}
\int_{t_0}^{+\infty}t\Phi(x(t))\,dt
\ge \int_{t_k}^{(1+\rho)t_k}t\Phi(x(t))\,dt
\ge \frac{\varepsilon_0}{2}\int_{t_k}^{(1+\rho)t_k}t\,dt
=\frac{\varepsilon_0}{4}\bigl((1+\rho)^2-1\bigr)t_k^2.
\end{aligned}
\end{equation*}
Letting $t_k\to+\infty$ contradicts \eqref{BregIntAgain}. Hence
$\Phi(x(t))\to0$ as $t\to+\infty$.
\end{proof}

Lemma~\ref{lem:BregmanToZero} shows that the Bregman distance vanishes along
the trajectory. The next step is to convert this scalar convergence into the
convergence of the gradient.

\begin{lemma}\label{lem:gradConv}
Let $x(t)$ be the primal trajectory of the dynamic \eqref{dy_main}, and let
$x^*\in\mathcal S$. Then
\[
    \nabla f(x(t))\to\nabla f(x^*),
    \qquad \text{as } t\to+\infty.
\]
\end{lemma}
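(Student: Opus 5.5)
We argue by compactness and contradiction, using Lemma~\ref{lem:BregmanToZero} together with a standard property of Bregman distances of convex $C^1$ functions. By Theorem~\ref{th_basic}$(ii)$ the trajectory $x(t)$ is bounded. It therefore suffices to show that every cluster point $\bar x$ of $x(t)$ satisfies $\nabla f(\bar x)=\nabla f(x^*)$. Indeed, if $\nabla f(x(t))\not\to\nabla f(x^*)$, there is a sequence $t_k\to+\infty$ with $\|\nabla f(x(t_k))-\nabla f(x^*)\|\ge\varepsilon_0$. Passing to a subsequence, $x(t_k)\to\bar x$, and continuity of $\nabla f$ then gives $\|\nabla f(\bar x)-\nabla f(x^*)\|\ge\varepsilon_0$, which is a contradiction.

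Fix such a cluster point $\bar x=\lim_k x(t_k)$. Since $\Phi=D_f(x^*,\cdot)$ is continuous, Lemma~\ref{lem:BregmanToZero} yields
\[
D_f(x^*,\bar x)=f(x^*)-f(\bar x)-\langle\nabla f(\bar x),x^*-\bar x\rangle=0.
\]
It remains to show that a vanishing Bregman distance forces equal gradients for a convex differentiable $f$. Set $g(y)=f(y)-f(\bar x)-\langle\nabla f(\bar x),y-\bar x\rangle$. Then $g$ is convex, $g\ge0$, and $g(\bar x)=0$. Moreover $g(x^*)=D_f(x^*,\bar x)=0$. By convexity, $g$ vanishes on the whole segment $[\bar x,x^*]$. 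Thus $x^*$ is a minimizer of $g$ over $\mathbb R^n$, and $\nabla g(x^*)=0$, that is, $\nabla f(x^*)=\nabla f(\bar x)$.

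I do not expect any step to be a real obstacle. The only point needing care is this last one: zero Bregman distance implies gradient equality. This needs no Lipschitz continuity of $\nabla f$, only convexity and differentiability. The first-order optimality of $x^*$ for the nonnegative convex function $g$ is what makes it clean.
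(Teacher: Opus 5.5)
Your proof is correct and essentially the paper's argument. You extract a cluster point $\bar x$, use Lemma~\ref{lem:BregmanToZero} and continuity to get $D_f(x^*,\bar x)=0$, deduce $\nabla f(\bar x)=\nabla f(x^*)$, and finish with a subsequence contradiction. Applying Fermat's rule to $g$ at $x^*$ is just a rephrasing of the paper's step $\nabla f(\bar x)\in\partial f(x^*)=\{\nabla f(x^*)\}$; your remark that $g$ vanishes on the segment $[\bar x,x^*]$ is not needed.
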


\begin{proof}
Since $x(t)$ is bounded on $[t_0,+\infty)$, every sequence $t_k\to+\infty$ admits a subsequence,
still denoted by $t_k$, such that $x(t_k)$ converges to some $\bar x\in\R^n$.
By Lemma \ref{lem:BregmanToZero} and the continuity of $\Phi$, we have
$\Phi(\bar x)=0$. Therefore,
\begin{equation}\label{BregCluster2}
    f(x^*)-f(\bar x)-\ip{\nabla f(\bar x)}{x^*-\bar x}=0.
\end{equation}
By the convexity of $f$,
\begin{equation}\label{convsub}
    f(y)\ge f(\bar x)+\ip{\nabla f(\bar x)}{y-\bar x},
    \qquad \forall y\in\R^n.
\end{equation}
Combining \eqref{BregCluster2} with \eqref{convsub}, we obtain
\[
    f(y)\ge f(x^*)+\ip{\nabla f(\bar x)}{y-x^*},
    \qquad \forall y\in\R^n.
\]
It follows from the convexity of $f$ that $\nabla f(\bar x)\in\partial f(x^*)$. Since $f$ is
differentiable at $x^*$, we have 
\begin{equation}\label{gradCluster}
    \nabla f(\bar x)=\nabla f(x^*).
\end{equation}

We now prove that $\nabla f(x(t))\to\nabla f(x^*)$ as $t\to+\infty$. Suppose, to the contrary, that  there are $\varepsilon_0>0$ and $t_k\to+\infty$ such that
$
    \norm{\nabla f(x(t_k))-\nabla f(x^*)}\ge \varepsilon_0.
$
Since $x(t)$ is bounded on $[t_0,+\infty)$, after passing to a subsequence of $\{t_k\}_{k\geq 1}$, there exist $t_{k_j}\to+\infty$ and $\bar x\in\mathbb R^n$ such that
$x(t_{k_j})\to\bar x$. By \eqref{gradCluster},
$\nabla f(\bar x)=\nabla f(x^*)$. Hence, by the continuity of $\nabla f$, we have
\[
    \nabla f(x(t_{k_j}))\to \nabla f(\bar x)=\nabla f(x^*),
\]
which contradicts
$
    \|\nabla f(x(t_{k_j}))-\nabla f(x^*)\|\ge \varepsilon_0.
$
Thus $\nabla f(x(t))\to\nabla f(x^*)$ as $t\to+\infty$.
\end{proof}

Lemma~\ref{lem:gradConv} identifies the limiting gradient of the primal
trajectory. To complete the stationarity relation, it remains to control the
dual contribution $A^\top\lambda(t)$.

\begin{lemma}\label{lem:AtransposeLambda}
Let $(x(t),\lambda(t))$ be the trajectory of the dynamic \eqref{dy_main}, and $(x^*,\lambda^*)\in\Omega$. Then
\[
    A^\top\lambda(t)\to A^\top\lambda^*,
    \qquad \text{as } t\to+\infty.
\]
\end{lemma}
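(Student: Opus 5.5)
Following the roadmap announced before Lemma~\ref{lem:BregmanToZero}, we regard the first equation of \eqref{dy_main} as a linear differential equation for the dual image $w(t):=A^\top(\lambda(t)-\lambda^*)$. Since $\nabla f(x^*)+A^\top\lambda^*=0$, the first equation can be rewritten as
\[
    w(t)+\theta t\dot w(t) = -\ddot x(t)-\frac{\alpha}{t}\dot x(t) - \bigl(\nabla f(x(t))-\nabla f(x^*)\bigr)-\beta A^\top(Ax(t)-b)=:h(t).
\]
By Lemma~\ref{lem:gradConv}, $\nabla f(x(t))-\nabla f(x^*)\to0$. By Theorem~\ref{th_basic}$(iii)$, $\beta A^\top(Ax(t)-b)\to0$. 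By Theorem~\ref{th_basic}$(ii)$, $\frac{\alpha}{t}\dot x(t)=\bigO(t^{-2})\to0$. The only term not yet known to vanish pointwise is $\ddot x(t)$. So we split $h=-\ddot x+r$ with $r(t)\to0$, and we avoid controlling $\ddot x$ pointwise by integrating it by parts.

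Set $\gamma=1/\theta\ge 2$. Multiplying the equation by $t^{\gamma-1}/\theta$ gives $\frac{d}{dt}\bigl(t^{\gamma}w(t)\bigr)=\gamma t^{\gamma-1}h(t)$, and hence
\[
    w(t)=\frac{t_0^\gamma w(t_0)}{t^\gamma}+\frac{\gamma}{t^\gamma}\int_{t_0}^t s^{\gamma-1}r(s)\,ds-\frac{\gamma}{t^\gamma}\int_{t_0}^t s^{\gamma-1}\ddot x(s)\,ds .
\]
The first term tends to $0$. The second is a Cesàro-type weighted average of $r$ with total weight $\frac{\gamma}{t^\gamma}\int_{t_0}^t s^{\gamma-1}ds\le1$, so it tends to $0$ because $r(s)\to0$ (the standard $\varepsilon$-splitting argument, splitting the integral at a large $T$). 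For the third term we integrate by parts:
\[
    \int_{t_0}^t s^{\gamma-1}\ddot x(s)\,ds=t^{\gamma-1}\dot x(t)-t_0^{\gamma-1}\dot x(t_0)-(\gamma-1)\int_{t_0}^t s^{\gamma-2}\dot x(s)\,ds.
\]
Using $\|\dot x(s)\|\le C/s$ from Theorem~\ref{th_basic}$(ii)$, the boundary term contributes $\bigO(t^{\gamma-2}/t^\gamma)=\bigO(t^{-2})$ after division by $t^\gamma$. The integral is bounded by $C\int_{t_0}^t s^{\gamma-3}ds$, which is $\bigO(t^{\gamma-2})$ for $\gamma>2$ and $\bigO(\ln t)$ for $\gamma=2$. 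In either case, dividing by $t^\gamma$ sends the term to $0$. Therefore $w(t)\to0$, i.e.\ $A^\top\lambda(t)\to A^\top\lambda^*$.

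The main obstacle is exactly the $\ddot x$ term. No pointwise bound on $\ddot x$ is available without Lipschitz continuity of $\nabla f$, and a direct bound would need one. The integration by parts above, combined with the velocity estimate $\|\dot x(t)\|=\bigO(1/t)$, is the step I would rely on. Note that the boundary case $\gamma=2$, i.e.\ $\theta=1/2$, still works only because of the extra factor $t^{-2}$ against $\ln t$. The averaging argument for $r$ is routine.
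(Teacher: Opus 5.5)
Your proposal is correct and follows essentially the same route as the paper: the integrating factor $t^{1/\theta}$ applied to $\theta t\dot w(t)+w(t)=h(t)$, a Ces\`aro (l'H\^opital) argument for the pointwise-vanishing terms, and integration by parts on the $\ddot x$ term combined with $\|\dot x(t)\|=\bigO(1/t)$, including the case $\theta=1/2$. The only difference is cosmetic. You absorb $\frac{\alpha}{t}\dot x(t)$ into the pointwise-vanishing part, whereas the paper bounds its weighted integral separately.
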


\begin{proof}
Set
\[
    y(t):=A^\top(\lambda(t)-\lambda^*),
    \quad
    a(t):=\nabla f(x(t))-\nabla f(x^*),
    \quad
    r(t):=\beta A^\top(Ax(t)-b).
\]
Using the first equation of \eqref{dy_main} and the relation
$\nabla f(x^*)+A^\top\lambda^*=0$, we obtain
\[
    \ddot x(t)+\frac{\alpha}{t}\dot x(t)+a(t)+y(t)+\theta t\dot y(t)+r(t)=0.
\]
Equivalently,
\[
    \theta t\dot y(t)+y(t)
    =-a(t)-r(t)-\ddot x(t)-\frac{\alpha}{t}\dot x(t).
\]
Let $p:=\frac{1}{\theta}$. Since $\theta\le \frac{1}{2}$, one has $p\ge2$.
Dividing the above equality by $\theta t$ gives
\[
    \dot y(t)+\frac{p}{t}y(t)
    =-\frac{p}{t}\left(a(t)+r(t)+\ddot x(t)+\frac{\alpha}{t}\dot x(t)\right).
\]
Multiplying both sides by the integrating factor $t^p$, we have
\[
    \frac{d}{dt}\left(t^py(t)\right)
    =
    -pt^{p-1}\left(a(t)+r(t)+\ddot x(t)+\frac{\alpha}{t}\dot x(t)\right).
\]
Integrating over $[t_0,t]$ and dividing by $t^p$, we get
\begin{equation}\label{eq_yt}
\begin{aligned}
y(t)=&\left(\frac{t_0}{t}\right)^py(t_0)
-p t^{-p}\int_{t_0}^{t}s^{p-1}a(s)\,ds
-p t^{-p}\int_{t_0}^{t}s^{p-1}r(s)\,ds  \\
&-p t^{-p}\int_{t_0}^{t}s^{p-1}\ddot x(s)\,ds
-p\alpha t^{-p}\int_{t_0}^{t}s^{p-2}\dot x(s)\,ds.
\end{aligned}
\end{equation}
We prove that each term on the right-hand side of \eqref{eq_yt} tends to
zero as $t\to+\infty$.

The first term clearly tends to zero. For the second term,  Lemma \ref{lem:gradConv} gives $\lim_{t+\infty}\|a(t)\|\to0$.
Then, it follows from L'H{\^o}pital's rule that
\[ \lim_{t+\infty} \left\|t^{-p}\int_{t_0}^{t}s^{p-1}a(s)\,ds\right\|\leq   \lim_{t+\infty} t^{-p}\int_{t_0}^{t}s^{p-1}\|a(s)\|\,ds=\lim_{t+\infty}\frac{\|a(t)\|}{p}=0.
\]
This implies that the second term  tends to zero. By the definition of $r(t)$ and Theorem \ref{th_basic}, we have $\|r(t)\|\leq \beta\|A\|\|Ax(t)-b\|\to 0$ as $t\to+\infty$, then the same argument   gives
that $p t^{-p}\int_{t_0}^{t}s^{p-1}r(s)\,ds$ tends to zero.
It remains to estimate the last two terms. Integrating by parts gives
\[
 t^{-p} \int_{t_0}^{t}s^{p-1}\ddot x(s)\,ds=\frac{1}{t}\dot x(t)-\frac{t_0^{p-1}}{t^p}\dot x(t_0)
-(p-1) t^{-p} \int_{t_0}^{t}s^{p-2}\dot x(s)\,ds.
\]
By Theorem \ref{th_basic}, $\|\dot x(t)\|=\bigO(1/t)$. Then together with $p\geq 2$ implies $\frac{1}{t}\dot x(t)\to0$ and $
    \frac{t_0^{p-1}}{t^p}\dot x(t_0)\to0$. Moreover, there exists $C>0$ such that $\|\dot{x}(t)\|\le \frac{C}{t}$, and then
\[
    \left\|t^{-p} \int_{t_0}^{t}s^{p-2}\dot x(s)\,ds\right\|
    \le
    C t^{-p} \int_{t_0}^{t}s^{p-3}\,ds\to0.
\]
Therefore, the last two terms in
\eqref{eq_yt} also tend to zero.

Consequently, the right-hand side of \eqref{eq_yt} tends to zero as
$t\to+\infty$, and hence $y(t)\to0$. By the definition of $y(t)$, this yields the result.
\end{proof}

Combining Lemma~\ref{lem:gradConv} and Lemma~\ref{lem:AtransposeLambda}, we
now state the main trajectory convergence result.

\begin{theorem}\label{thm:main}
Let $(x(t),\lambda(t))$ be the trajectory of the dynamic \eqref{dy_main}.
Then there exists $(x^*, \lambda^*)\in \Omega$ such that
\[
    (x(t),\lambda(t))\to (x^*, \lambda^*)
    \qquad \text{as } t\to+\infty.
\]
\end{theorem}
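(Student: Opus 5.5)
The plan is to show two things: every cluster point of $z(t)=(x(t),\lambda(t))$ lies in $\Omega$, and the cluster point is unique. I will avoid Opial's lemma in the usual form, because in the critical case ($\alpha=3$, $\theta=\frac12$) one has $\xi=0$. Instead I use the existence of $\lim_{t\to+\infty}\E_{z^*}(t)$ for every $z^*\in\Omega$ and compare two energies, in the spirit of Jang and Ryu.

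\textbf{Step 1 (cluster points lie in $\Omega$).} By Theorem~\ref{th_basic}(ii), $z(t)$ is bounded, so cluster points exist. Let $z(t_k)\to\bar z=(\bar x,\bar\lambda)$ and fix any $(x^*,\lambda^*)\in\Omega$.
\begin{itemize}
\item Theorem~\ref{th_basic}(iii) gives $A\bar x=b$.
\item Lemma~\ref{lem:gradConv} and continuity of $\nabla f$ give $\nabla f(\bar x)=\nabla f(x^*)$.
\item Lemma~\ref{lem:AtransposeLambda} gives $A^\top\bar\lambda=A^\top\lambda^*$.
\end{itemize}
Hence $\nabla f(\bar x)+A^\top\bar\lambda=\nabla f(x^*)+A^\top\lambda^*=0$, so $\bar z\in\Omega$.

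\textbf{Step 2 (a quantity with a limit).} Take $z_1=(x_1,\lambda_1)$ and $z_2=(x_2,\lambda_2)$ in $\Omega$, and set $w=z_2-z_1$. Each $\E_{z_i}$ is nonincreasing and nonnegative by the proof of Theorem~\ref{th_basic}, so $\E_{z_1}(t)-\E_{z_2}(t)$ converges. I then expand this difference term by term.
\begin{itemize}
\item The Lagrangian parts differ by $\theta^2t^2\langle\lambda_1-\lambda_2,Ax(t)-b\rangle$. This uses $f(x_1)=f(x_2)$ and $Ax_1=Ax_2=b$.
\item Proposition~\ref{prop_App1} gives $A^\top(\lambda_1-\lambda_2)=0$. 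Therefore $\langle\lambda_1-\lambda_2,Ax-b\rangle=\langle A^\top(\lambda_1-\lambda_2),x-x_1\rangle=0$, and this term vanishes identically.
\item The quadratic parts differ by $\langle w,(1+\xi)z(t)+\theta t\dot z(t)\rangle$ plus a constant.
\end{itemize}
Writing $g(t)=\langle w,z(t)\rangle$, this gives
\[
(1+\xi)g(t)+\theta t\dot g(t)\to \ell\in\R .
\]

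\textbf{Step 3 (convergence of $g$).} Note that $1+\xi=\theta(\alpha-1)>0$. Set $h=g-\ell/(1+\xi)$ and $q=(1+\xi)/\theta>0$. Then $\dot h+\frac{q}{t}h=\frac{e(t)}{\theta t}$ with $e(t)\to0$. Using the integrating factor $t^q$ and L'H\^opital, exactly as in the proof of Lemma~\ref{lem:AtransposeLambda}, I get $h(t)\to0$. So $\langle w,z(t)\rangle$ converges for every such pair.

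\textbf{Step 4 (uniqueness).} If $\bar z_1,\bar z_2$ are two cluster points, Step 1 puts them in $\Omega$. Applying Step 3 with $z_1=\bar z_1$, $z_2=\bar z_2$ and passing to the limit along the two subsequences gives $\langle w,\bar z_1\rangle=\langle w,\bar z_2\rangle$, i.e.\ $\|w\|^2=0$. So the bounded trajectory has a unique cluster point in $\Omega$, and therefore $z(t)$ converges to it.

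The main obstacle I expect is Step 2. One has to check that the $t^2$-weighted Lagrangian difference really cancels exactly and not merely up to $\bigO(1)$. This is where Proposition~\ref{prop_App1} is essential. One also has to verify the energy expansion carefully, with $v$ written in terms of $z-z_i$.
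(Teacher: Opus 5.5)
Your proposal is correct and follows the paper's proof essentially step for step: Step 1 uses the same three ingredients (Theorem~\ref{th_basic}(iii), Lemma~\ref{lem:gradConv}, Lemma~\ref{lem:AtransposeLambda}). In Step 2, your $g(t)=\langle z_2-z_1,z(t)\rangle$ differs from the paper's $h(t)=\frac12(\|z(t)-z_1\|^2-\|z(t)-z_2\|^2)$ only by an additive constant, so your relation $(1+\xi)g+\theta t\dot g\to\ell$ is, up to the factor $\theta$ and a constant, the paper's identity $(\alpha-1)h+t\dot h=H(t)$; it is solved with the same integrating factor $t^{\alpha-1}$ and l'H\^opital argument, and you conclude via $\|w\|^2=0$ rather than by contradiction.
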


\begin{proof}
Set $z(t)=(x(t),\lambda(t))$. By Theorem~\ref{th_basic}, the trajectory
$z(t)$ is bounded on $[t_0,+\infty)$. Hence $z(t)$ has at least one cluster
point. We prove the convergence in two steps.

\emph{Step 1: Every cluster point belongs to $\Omega$.}

Let $(\bar x,\bar\lambda)$ be an arbitrary cluster point of
$(x(t),\lambda(t))$. Then there exists a sequence $t_k\to+\infty$ such that
\[
    (x(t_k),\lambda(t_k))\to(\bar x,\bar\lambda).
\]
By Theorem~\ref{th_basic}, we have
\[
    Ax(t)-b\to0,
    \qquad \text{as } t\to+\infty.
\]
Passing to the limit along $t_k$, we obtain $A\bar x=b$.

Next, fix an arbitrary point $(x^*,\lambda^*)\in\Omega$. Since
$x(t_k)\to\bar x$ and $\nabla f$ is continuous, it follows from Lemma
\ref{lem:gradConv} that
\[
   \nabla f(\bar x)
   =
   \lim_{k\to+\infty}\nabla f(x(t_k))
   =
   \nabla f(x^*).
\]
Moreover, by Lemma~\ref{lem:AtransposeLambda},
\[
A^\top\lambda(t)\to A^\top\lambda^*.
\]
Passing to the limit along $t_k$ and using $\lambda(t_k)\to\bar\lambda$, we
obtain
\[
    A^\top\bar\lambda=A^\top\lambda^*.
\]
Since $(x^*,\lambda^*)\in\Omega$, we conclude that
\[
    \nabla f(\bar x)+A^\top\bar\lambda
    =
    \nabla f(x^*)+A^\top\lambda^*
    =
    0.
\]
Together with $A\bar x=b$, this proves $(\bar x,\bar\lambda)\in\Omega$.
Therefore, every cluster point of $(x(t),\lambda(t))$ belongs to $\Omega$.

\emph{Step 2: The trajectory has a unique cluster point.}

Assume, by contradiction, that $z(t)$ has two distinct cluster points
$z_1=(x_1,\lambda_1)$ and $z_2=(x_2,\lambda_2)$. Then there exist two
sequences $s_k\to+\infty$ and $t_k\to+\infty$ such that
\begin{equation}\label{eq_con_z12}
    z(s_k)\to z_1,
    \qquad
    z(t_k)\to z_2.
\end{equation}
By Step 1, both $z_1$ and $z_2$ belong to $\Omega$.

From the proof of Theorem \ref{th_basic} $(i)$, for every $z^*\in\Omega$,
the limit
\[
    \lim_{t\to+\infty}\E_{z^*}(t)
\]
exists, where $\E_{z^*}(t)$ is defined in \eqref{Edef}. Hence both
$\lim_{t\to+\infty}\E_{z_1}(t)$ and
$\lim_{t\to+\infty}\E_{z_2}(t)$ exist. Therefore,
\[
    H(t):=\frac{1}{\theta}\bigl(\E_{z_1}(t)-\E_{z_2}(t)\bigr)
\]
admits a finite limit as $t\to+\infty$. By the definition of
$\E_{z^*}(t)$, all terms independent of $z^*$ cancel in the difference, and
we obtain
\begin{eqnarray}\label{eq_con_Ht}
	  H(t)
    &=&\theta t^2(\ip{\lambda_1-\lambda_2}{Ax(t)-b})+
    \frac{\alpha-1}{2}
    \left(
    \|z(t)-z_1\|^2-\|z(t)-z_2\|^2
    \right)\nonumber \\
    &&- t\langle \dot z(t),z_1-z_2\rangle.
\end{eqnarray}
Since $z_1,z_2\in\Omega$, we have $Ax_1=b$. Moreover, by Proposition
\ref{prop_App1}, $A^{\top}\lambda_1=A^{\top}\lambda_2$. Hence
\[
\ip{\lambda_1-\lambda_2}{Ax(t)-b}
=
\ip{A^{\top}(\lambda_1-\lambda_2)}{x(t)-x_1}
=0.
\]
Introduce the auxiliary function
\[
h(t):=\frac12\bigl(\|z(t)-z_1\|^2-\|z(t)-z_2\|^2\bigr).
\]
Then $
    \dot{h}(t)=-\langle \dot{z}(t),z_1-z_2\rangle.
$
Therefore, from \eqref{eq_con_Ht}, we have
\[
    (\alpha-1)h(t)+t\dot{h}(t)=H(t).
\]
Multiplying both sides by $t^{\alpha-2}$ gives
\[
    \frac{d}{dt}\left(t^{\alpha-1}h(t)\right)
    =
    t^{\alpha-2}H(t).
\]
Integrating over $[t_0,t]$, we get
\[
    t^{\alpha-1}h(t)-t_0^{\alpha-1}h(t_0)
    =
    \int_{t_0}^{t}s^{\alpha-2}H(s)\,ds.
\]
Thus
\[
h(t)
=
\frac{t_0^{\alpha-1}}{t^{\alpha-1}}h(t_0)
+
\frac{1}{t^{\alpha-1}}\int_{t_0}^{t}s^{\alpha-2}H(s)\,ds.
\]
Since $H(t)$ admits a finite limit as $t\to+\infty$ and $\alpha\ge3$, it
follows from l'H\^opital's rule that $h(t)$ has a finite limit. More
precisely,
\[
\lim_{t\to+\infty} h(t)
=
\lim_{t\to+\infty}
\frac{1}{t^{\alpha-1}}\int_{t_0}^{t}s^{\alpha-2}H(s)\,ds
=
\frac{1}{\alpha-1}\lim_{t\to+\infty}H(t),\quad \text{exists}.
\]

On the other hand, by the definition of $h(t)$ and \eqref{eq_con_z12}, we
have
\[
\lim_{s_k\to+\infty}h(s_k)
=
-\frac12\|z_1-z_2\|^2,
\qquad
\lim_{t_k\to+\infty}h(t_k)
=
\frac12\|z_1-z_2\|^2.
\]
Since $h(t)$ has a unique limit as $t\to+\infty$, the two subsequential
limits must be equal. Therefore,
\[
    -\frac12\|z_1-z_2\|^2
    =
    \frac12\|z_1-z_2\|^2,
\]
which implies $z_1=z_2$. This contradicts the assumption that $z_1$ and
$z_2$ are distinct cluster points. Hence the bounded trajectory
$z(t)=(x(t),\lambda(t))$ has a unique cluster point.

Combining Step 1 and Step 2, the bounded trajectory $z(t)$ has a unique
cluster point, and this cluster point belongs to $\Omega$. Therefore, there
exists $z^*=(x^*,\lambda^*)\in\Omega$ such that
\[
    (x(t),\lambda(t))\to(x^*, \lambda^*),
    \qquad t\to+\infty.
\]
The proof is complete.
\end{proof}

\begin{remark}
Theorem~\ref{thm:main} extends the existing trajectory convergence results
in two directions. The convergence result in \cite{BotJDE} was obtained
under the Lipschitz continuity of $\nabla f$ and the noncritical condition
$\alpha>3$. In contrast, our result includes the critical damping case
$\alpha=3$ and does not require the Lipschitz continuity of $\nabla f$. The
proof strategy is also different. Instead of relying on cocoercivity and
Opial's lemma as in \cite{BotJDE}, we use a Bregman-distance argument and
prove convergence through the existence and uniqueness of cluster points.
When the constraint is absent, the result reduces to the corresponding
trajectory convergence results for $\text{\rm (AVD)}_{\alpha}$ in
\cite{AttouchMP,Jang25}.
\end{remark}

\section{Improved convergence rates in the case $\alpha>3$}\label{sec3}

In this section, we investigate improved convergence rates for the objective
residual, the feasibility violation, and the velocity of the trajectory
$(x(t),\lambda(t))$ generated by the dynamic \eqref{dy_main}. The parameters
are assumed to satisfy the following stronger condition.

 \begin{center}
 \vspace{0.5em}
\setlength{\fboxsep}{6pt}
\setlength{\fboxrule}{0.4pt}
\fbox{%
\begin{minipage}{0.96\textwidth}
\begin{assumption}\label{ass2}
	The parameter $\alpha$, $\beta$, $\theta$ in the dynamic \eqref{dy_main} satisfy
	\[\alpha> 3, \quad \theta\in\left(\frac{1}{\alpha-1},\frac12\right),\quad  \beta\ge 0.\]
\end{assumption}
\end{minipage}}
\vspace{0.5em}
\end{center}

Compared with Assumption \ref{ass1}, Assumption \ref{ass2} excludes the
critical damping case and the two endpoint values of $\theta$. This strict
parameter condition allows us to use the weighted integral estimates in
Theorem~\ref{th_basic} together with Lemma~\ref{lem:tech} in Appendix A to
derive the improved rates $o(t^{-2})$ for the objective residual and the
feasibility violation, and $o(t^{-1})$ for the velocity.

\begin{theorem}\label{th_ocon}
Let $(x(t),\lambda(t))$ be a trajectory of the dynamic \eqref{dy_main} with
parameters satisfying Assumption \ref{ass2}. Then the following convergence
rates hold:
\begin{itemize}
    \item[$(i)$] Velocity:
    \[
        \norm{(\dot x(t), \dot \lambda(t))}
        =o\left(\frac1t\right).
    \]
    \item[$(ii)$] Objective residual and feasibility violation:
    \[
        |f(x(t))-f(x^*)|
        =o\left(\frac1{t^2}\right),
        \qquad
        \|Ax(t)-b\|
        =o\left(\frac1{t^2}\right).
    \]
\end{itemize}
\end{theorem}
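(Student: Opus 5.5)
Under Assumption \ref{ass2} we have $1-2\theta>0$ and $\xi=\theta\alpha-\theta-1>0$, so Theorem~\ref{th_basic}$(i)$ gives three finite weighted integrals:
\[
\int_{t_0}^{+\infty} tG(t)\,dt<+\infty,\qquad \int_{t_0}^{+\infty} t\|\dot z(t)\|^2dt<+\infty,\qquad \int_{t_0}^{+\infty} tD_f(x^*,x(t))\,dt<+\infty,
\]
where $G(t):=\La_\beta(x(t),\lambda^*)-\La_\beta(x^*,\lambda(t))$. The aim is the $o$-rates for $t^2G(t)$ and $t^2\|\dot z(t)\|^2$. The mechanism is the standard one (cf.\ May and Lemma~\ref{lem:tech}): if a nonnegative quantity $W(t)$ has $\lim W(t)$ existing and $\int W(t)/t\,dt<\infty$, then $W(t)\to0$.

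First, I set $W(t):=\theta^2t^2G(t)+\frac{\theta^2}{2}t^2\|\dot z(t)\|^2$, the ``gap plus kinetic'' part of $\E_{z^*}$. The three integrals above give $\int_{t_0}^{\infty}W(t)/t\,dt<\infty$.

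Next, I show that $\lim_{t\to\infty}W(t)$ exists. By Theorem~\ref{thm:main}, $z(t)\to z_\infty\in\Omega$, so I take $z^*=z_\infty$ in the energy. Expanding $\frac12\|v\|^2$ gives
\[
\E_{z^*}(t)=W(t)+\frac{1+\xi}{2}\|z(t)-z^*\|^2+\theta t\langle \dot z(t),z(t)-z^*\rangle .
\]
Here $\E_{z^*}$ is nonincreasing and nonnegative, so it has a limit. The term $\|z(t)-z^*\|^2$ tends to $0$. For the cross term, $t\|\dot z\|$ is bounded by Theorem~\ref{th_basic}$(ii)$ and $z(t)-z^*\to0$, so the cross term also tends to $0$. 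Hence $\lim W(t)=\lim\E_{z^*}(t)$ exists.

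I then combine the two facts. Since $\int W/t<\infty$ and $\int_{t_0}^\infty dt/t=\infty$, the limit must be $0$; equivalently, apply Lemma~\ref{lem:tech}. Because both summands of $W$ are nonnegative, this yields $t^2G(t)\to0$ and $t\|\dot z(t)\|\to0$. That proves $(i)$.

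For $(ii)$, recall that $G(t)=f(x)-f(x^*)+\langle\lambda^*,Ax-b\rangle+\frac\beta2\|Ax-b\|^2$ for the limit multiplier $\lambda^*$, so $G=o(t^{-2})$. To separate $\|Ax-b\|$ from $G$, I use the dual equation, as in \cite[Theorem 3.4]{BotJDE}. The second line of \eqref{dy_main} reads $\frac{d}{dt}\bigl(t\dot\lambda+(\alpha-1)\ldots\bigr)$-type identity:
\[
\frac{1}{t^{\alpha-1}}\frac{d}{dt}\Bigl(t^{\alpha}\dot\lambda\Bigr)\ \text{relates to}\ \frac{d}{dt}\bigl(t^{1/\theta}(Ax-b)\bigr)t^{1-1/\theta}\theta.
\]
More precisely, with $p=1/\theta$, I write $A(x+\theta t\dot x)-b=\theta t^{1-p}\frac{d}{dt}\bigl(t^p(Ax-b)\bigr)$ and $\ddot\lambda+\frac\alpha t\dot\lambda=t^{-\alpha}\frac{d}{dt}(t^\alpha\dot\lambda)$. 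Integrating twice expresses $t^p(Ax(t)-b)$ through integrals of $s^{p-1-\alpha}\frac{d}{ds}(s^\alpha\dot\lambda)$. Integrating by parts gives
\[
t^p(Ax-b)=\frac{1}{\theta}\Bigl[t^{p}\,t\dot\lambda(t)\cdot\tfrac{1}{?}+\ldots\Bigr],
\]
with the precise bookkeeping following the earlier proof. The result is $\|Ax(t)-b\|\lesssim t^{-p}\int^t s^{p-2}\,\|s\dot\lambda(s)\|\,ds+\|\dot\lambda(t)\|/t+O(t^{-p})$.

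Since $s\|\dot\lambda(s)\|\to0$, the step that shows $o(t^{-2})$ is L'H\^opital: the integral term is $o(t^{-1})\cdot t^{-1}=o(t^{-2})$, using $p>2$ because $\theta<1/2$. The boundary term $t^{-1}\|\dot\lambda\|=o(t^{-2})$, and $O(t^{-p})=o(t^{-2})$ as well. This is the main obstacle: redoing the \cite{BotJDE} $O$-estimate carefully so that every term inherits the little-$o$. The strict inequality $\theta<1/2$ is exactly what kills the homogeneous term.

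Finally, $|f(x)-f(x^*)|\le G+\|\lambda^*\|\|Ax-b\|+\frac\beta2\|Ax-b\|^2=o(t^{-2})$, which completes $(ii)$.
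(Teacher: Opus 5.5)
Part $(i)$ follows the paper. You take $z^*$ to be the limit from Theorem~\ref{thm:main}, observe that the quadratic and cross terms of $\E_{z^*}$ vanish, and conclude that $\lim_{t\to+\infty}t^2\bigl(G(t)+\frac12\|\dot z(t)\|^2\bigr)$ exists. Combining this with the weighted integrability forces the limit to be zero. The result you need here is Lemma~\ref{lem_app_o}, not Lemma~\ref{lem:tech}; your direct $\int dt/t=+\infty$ argument is exactly its proof.

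For the feasibility rate you take a genuinely different, more explicit route. The paper forms $M(t)=t^{1-\alpha}\int_{t_0}^t s^\alpha(r+\theta s\dot r)\,ds=t\dot\lambda(t)-t_0^\alpha t^{1-\alpha}\dot\lambda(t_0)\to0$ and then invokes Lemma~\ref{lem:tech}. That lemma substitutes $y=t^2r$, derives an auxiliary ODE for $Q$, and applies the Ces\`aro-type Lemma~\ref{lem:cesaro}. You instead use the exact identity $\frac{d}{dt}(t^pr)=\frac1\theta t^{p-1-\alpha}\frac{d}{dt}(t^\alpha\dot\lambda)$ with $p=1/\theta$, followed by one integration by parts.

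Your bookkeeping is garbled, and the bound you finally write is off by one power. The correct formula is
\[
r(t)=\Bigl(\frac{t_0}{t}\Bigr)^{p}r(t_0)+\frac{\dot\lambda(t)}{\theta t}-\frac{t_0^{p-1}\dot\lambda(t_0)}{\theta t^{p}}+\frac{\alpha+1-p}{\theta\,t^{p}}\int_{t_0}^{t}s^{p-3}\bigl(s\dot\lambda(s)\bigr)\,ds .
\]
The weight on $\|s\dot\lambda(s)\|$ is therefore $s^{p-3}$, not $s^{p-2}$. With $s^{p-2}$, l'H\^opital would give only $o(t^{-1})$.

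With the correct weight, every term is $o(t^{-2})$, exactly as you claim. This uses $p>2$ (that is, $\theta<\frac12$) together with $t\|\dot\lambda(t)\|\to0$ from $(i)$. Your route is shorter and self-contained for $\delta\equiv1$. The paper's Lemma~\ref{lem:tech} is heavier, but it is built for a general scaling $\delta(t)$, which the paper reuses in Section~\ref{sec4}.
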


\begin{proof}
Since the parameters satisfy Assumption \ref{ass2}, they also satisfy
Assumption \ref{ass1}. Therefore, all convergence results in
Section~\ref{sec2} are available. Let $z(t)=(x(t),\lambda(t))$. By
Theorem~\ref{thm:main}, there exists $z^*=(x^*,\lambda^*)\in\Omega$ such
that
\[
   \lim_{t\to+\infty} z(t) = z^*.
\]
Moreover, by Theorem~\ref{th_basic} $(ii)$, $t\|\dot z(t)\|$ is bounded on
$[t_0,+\infty)$. Hence
\[
\lim_{t\to+\infty}
\left|\ip{z(t)-z^*}{t\dot z(t)}\right|
\le
\sup_{t\ge t_0} t\|\dot z(t)\|
\cdot
\lim_{t\to+\infty}\|z(t)-z^*\|
=0.
\]
From the proof of Theorem~\ref{th_basic}, the limit of the energy function
exists, namely,
\[
    \lim_{t\to+\infty}\E_{z^*}(t)
    \qquad \text{exists}.
\]
Using the definition of $\E_{z^*}(t)$ in \eqref{Edef}, the convergence
$z(t)\to z^*$, and the preceding estimate, we obtain that
\[
\lim_{t\to+\infty}
\theta^2 t^2\left(
\La_\beta(x(t),\lambda^*)-\La_\beta(x^*,\lambda(t))
+\frac{1}{2}\norm{\dot z(t)}^2
\right)
\qquad \text{exists}.
\]
Since $\theta\in\left(\frac{1}{\alpha-1},\frac{1}{2}\right)$, Theorem
\ref{th_basic} $(i)$ gives
\[
\int_{t_0}^{+\infty}
t(\La_\beta(x(t),\lambda^*)-\La_\beta(x^*,\lambda(t)))\,dt<+\infty
\]
and
\[
\int_{t_0}^{+\infty} t\norm{\dot z(t)}^2\,dt<+\infty.
\]

Set
\[
G(t):=
\La_\beta(x(t),\lambda^*)-\La_\beta(x^*,\lambda(t))
+\frac12\|\dot z(t)\|^2.
\]
Then $G(t)\ge 0$ and
\[
    \lim_{t\to+\infty} t^2G(t)\ \text{exists},
    \qquad
    \int_{t_0}^{+\infty}  tG(t)\,dt <+\infty.
\]
It follows from Lemma~\ref{lem_app_o} that
$
    G(t)=o\left({t^{-2}}\right).
$
By the definition of $G(t)$, we obtain
\[
    \|\dot z(t)\|=o\left(\frac1t\right),
\]
which proves $(i)$. Moreover,
\begin{equation}\label{eq_laro}
\La_\beta(x(t),\lambda^*)-\La_\beta(x^*,\lambda(t))
=o\left(\frac{1}{t^{2}}\right).
\end{equation}

Next, we prove $(ii)$. Let
\[
    r(t):=Ax(t)-b.
\]
Then
\[
    r(t)+\theta t\dot r(t)
    =
    A(x(t)+\theta t\dot x(t))-b.
\]
By the second equation of the dynamic \eqref{dy_main}, we have
\[
    r(t)+\theta t\dot r(t)
    =
    \ddot\lambda(t)+\frac{\alpha}{t}\dot\lambda(t).
\]
Therefore,
\[
\begin{aligned}
\frac{1}{t^{\alpha-1}}
\int_{t_0}^{t}s^\alpha\bigl(r(s)+\theta s\dot r(s)\bigr)\,ds
&=
\frac{1}{t^{\alpha-1}}
\int_{t_0}^{t}s^\alpha
\left(\ddot\lambda(s)+\frac{\alpha}{s}\dot\lambda(s)\right)\,ds  \\
&=
\frac{1}{t^{\alpha-1}}
\int_{t_0}^{t}\frac{d}{ds}\bigl(s^\alpha\dot\lambda(s)\bigr)\,ds  \\
&=
t\dot\lambda(t)
-
\frac{t_0^\alpha}{t^{\alpha-1}}\dot\lambda(t_0).
\end{aligned}
\]
By $(i)$, $t\dot\lambda(t)\to0$ as $t\to+\infty$. Since $\alpha>3$, we also
have
\[
    \frac{t_0^\alpha}{t^{\alpha-1}}\dot\lambda(t_0)\to0.
\]
Thus
\[
\lim_{t\to+\infty}
\frac{1}{t^{\alpha-1}}
\int_{t_0}^{t}s^\alpha\bigl(r(s)+\theta s\dot r(s)\bigr)\,ds
=0.
\]
It follows from Lemma~\ref{lem:tech} with $\delta(t)=1$ that
\[
    \|Ax(t)-b\|=\|r(t)\|=o(t^{-2}).
\]

It remains to estimate the objective residual. By the definition of the
augmented Lagrangian residual and the feasibility of $x^*$, we have
\[
\La_\beta(x(t),\lambda^*)-\La_\beta(x^*,\lambda(t))
=
f(x(t))-f(x^*)+\langle \lambda^*,Ax(t)-b\rangle
+\frac{\beta}{2}\|Ax(t)-b\|^2.
\]
Hence, from \eqref{eq_laro}, we have
\[
\begin{aligned}
|f(x(t))-f(x^*)|
&\le
\La_\beta(x(t),\lambda^*)-\La_\beta(x^*,\lambda(t))
+\|\lambda^*\|\|Ax(t)-b\|+\frac{\beta}{2}\|Ax(t)-b\|^2\\
&=o\left(\frac1{t^2}\right).
\end{aligned}
\]
This proves $(ii)$, and the proof is complete.
\end{proof}

\begin{remark}
Theorem~\ref{th_ocon} improves the existing $\bigO(t^{-2})$ convergence
rates for the Nesterov accelerated primal-dual dynamic \eqref{dy_main} studied in
\cite{ZengTAC,BotJDE,HeSICON,HeAA}. More precisely, the objective residual
and the feasibility violation are improved from $\bigO(t^{-2})$ to
$o(t^{-2})$, and the velocity estimate is improved from $\bigO(t^{-1})$ to
$o(t^{-1})$. In the unconstrained case $A=0$ and $b=0$, the result reduces to the
corresponding $o(t^{-2})$ rate for the classical
$\text{\rm (AVD)}_{\alpha}$ dynamic \cite{MayTJM,AttouchJEMS}.
\end{remark}

  \section{Extensions to time-scaled Nesterov accelerated primal-dual dynamical system}\label{sec4}

In this section, we show that the convergence results developed above can be
extended to the following time-scaled Nesterov accelerated primal-dual dynamical system:
\begin{equation}\label{dy_scaled}
\begin{cases}
\ddot{x}(t)+\dfrac{\alpha}{t}\dot{x}(t)
+\delta(t)\left(\nabla f(x(t))
+A^\top\bigl(\lambda(t)+\theta t\dot{\lambda}(t)\bigr)
+\beta A^\top(Ax(t)-b)\right)=0,\\[0.7em]
\ddot{\lambda}(t)+\dfrac{\alpha}{t}\dot{\lambda}(t)
-\delta(t)\bigl(A(x(t)+\theta t\dot{x}(t))-b\bigr)=0,
\end{cases}
\end{equation}
where $\delta:[t_0,+\infty)\to(0,+\infty)$ is a continuously differentiable
time-scaling function. The dynamic \eqref{dy_scaled} can be viewed as a
time-rescaled version of the primal-dual dynamic \eqref{dy_main}. Related
time-scaled primal-dual dynamical systems were studied in
\cite{HulettAMO,HeAA,AttouchJOTA}, where convergence rates depending on
$\delta(t)$ and weak convergence of trajectories were obtained under the
Lipschitz continuity of $\nabla f$.

We work in the finite-dimensional setting and assume only that $f$ is convex
and continuously differentiable. Throughout this subsection, we impose the
following parameter and scaling assumptions.

 \begin{center}
 \vspace{0.5em}
\setlength{\fboxsep}{6pt}
\setlength{\fboxrule}{0.4pt}
\fbox{%
\begin{minipage}{0.96\textwidth}
\begin{assumption}\label{ass3}
	The parameter $\alpha$, $\beta$, $\theta$ in the dynamic \eqref{dy_scaled} satisfy
	 \[\alpha\geq 3,\qquad
   \theta\in\left[\frac{1}{\alpha-1},\frac12\right],  \qquad  \beta\ge0.\]
   Moreover, $\delta:[t_0,+\infty)\to[\delta_0,+\infty)$  is a continuously differentiable function with $\delta_0>0$ and satisfy
\[
   \inf_{t\ge t_0}\frac{t\dot\delta(t)}{\delta(t)} >-\infty,\quad  \sup_{t\ge t_0}\frac{t\dot\delta(t)}{\delta(t)}
    \leq
  \frac{1}{\theta}-2, \qquad \lim_{t\to+\infty}t^2\delta(t) = +\infty.
\]
   \end{assumption}
\end{minipage}
}
\vspace{0.5em}
\end{center}

For fixed $z^*=(x^*,\lambda^*)\in\Omega$, let
$z(t)=(x(t),\lambda(t))$ be a trajectory of the dynamic \eqref{dy_scaled}. We recall the
corresponding Lyapunov energy function used in \cite{HulettAMO}:
\[
    \E^{\delta}_{z^*}(t)
    =
    \theta^2t^2\delta(t)(\La_\beta(x(t),\lambda^*)- \La_\beta(x^*,\lambda(t)))
    +\frac{1}{2}\norm{v(t)}^2
    +\frac{\xi}{2}\norm{z(t)-z^*}^2,
\]
where
\[
    v(t)=z(t)-z^*+\theta t\dot z(t),
    \qquad
    \xi=\theta\alpha-\theta-1.
\]
By the proof of \cite[Lemma 3.1]{HulettAMO}, one has
\begin{eqnarray*}
 \dot \E^{\delta}_{z^*}(t)
&=&
\theta^2t(2\delta(t)+t\dot{\delta}(t))
(\La_\beta(x(t),\lambda^*)- \La_\beta(x^*,\lambda(t)))
-\xi\theta t\norm{\dot z(t)}^2\\
&&-\theta t\delta(t)\left(
\ip{\nabla f(x(t))+A^\top\lambda^*}{x(t)-x^*}
+\beta\langle A^\top(Ax(t)-b),x(t)-x^*\rangle
\right).
\end{eqnarray*}
Using the same decomposition as in the proof of Theorem~\ref{th_basic}, we
obtain
\begin{eqnarray*}
\dot \E^{\delta}_{z^*}(t)
&\leq&
\theta^2t\left(\left(2-\frac{1}{\theta}\right)\delta(t)+t\dot{\delta}(t)\right)
(\La_\beta(x(t),\lambda^*)- \La_\beta(x^*,\lambda(t)))\\
&&-\xi\theta t\norm{\dot z(t)}^2
-\theta t\delta(t)D_f(x^*,x(t)).
\end{eqnarray*}
By Assumption~\ref{ass3}, the right-hand side is negative. Hence
$\E^{\delta}_{z^*}$ is nonincreasing. Arguing as in Theorem~\ref{th_basic} and \cite[Theorem 3.2-Theorem 3.3]{HulettAMO}, we obtain the following basic estimates.

\begin{theorem}\label{th_scal}
Let $(x(t),\lambda(t))$ be a trajectory of the dynamic \eqref{dy_scaled},
and   $(x^*,\lambda^*)\in\Omega$. Suppose that Assumption~\ref{ass3} holds.
Then the trajectory $(x(t),\lambda(t))$ is bounded on $[t_0,+\infty)$, and
the following estimates hold:
\[
    \|Ax(t)-b\|
    =
    \bigO\left(\frac1{t^2\delta(t)}\right),
    \qquad
    |f(x(t))-f(x^*)|
    =
    \bigO\left(\frac1{t^2\delta(t)}\right),
\]
\[
    \norm{(\dot x(t), \dot \lambda(t))}
    =
    \bigO\left(\frac1t\right),
    \qquad
    \int_{t_0}^{+\infty} t\delta(t)D_f(x^*,x(t))\,dt<+\infty.
\]
Moreover, if $\alpha>3$,
$\theta\in\left(\frac{1}{\alpha-1},\frac12\right)$ and $\sup_{t\ge t_0}\frac{t\dot\delta(t)}{\delta(t)}<
  \frac{1}{\theta}-2$, then
\[
    \int_{t_0}^{+\infty}
    t\norm{(\dot x(t),\dot \lambda(t))}^2\,dt<+\infty
\]
and
\[
\int_{t_0}^{+\infty}
t{\delta}(t) 
(\La_\beta(x(t),\lambda^*)- \La_\beta(x^*,\lambda(t)))\,dt
<+\infty.
\]
\end{theorem}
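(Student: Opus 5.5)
Since $\dot\E^{\delta}_{z^*}\le 0$ and $\E^{\delta}_{z^*}\ge0$, I will integrate the differential inequality displayed just before the statement over $[t_0,+\infty)$. Under Assumption~\ref{ass3}, the coefficient $(2-\frac1\theta)\delta(t)+t\dot\delta(t)$ is $\le 0$, because $\sup t\dot\delta/\delta\le \frac1\theta-2$. The gap term is nonnegative by the saddle point property, and $\xi\ge0$. Hence $\E^{\delta}_{z^*}(t)\le \E^{\delta}_{z^*}(t_0)$ and
\[
\theta\int_{t_0}^{+\infty} t\delta(t)D_f(x^*,x(t))\,dt\le \E^{\delta}_{z^*}(t_0)<+\infty .
\]
Boundedness of $\|v(t)\|$ then lets me apply Lemma~\ref{le_bounde} with $\bar z=z^*$ and $g(t)=\theta t$, exactly as in Theorem~\ref{th_basic}$(ii)$. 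This gives boundedness of $z(t)$ and $\|\dot z(t)\|=\bigO(1/t)$. In the strict case, I will set $\kappa:=\frac1\theta-2-\sup t\dot\delta/\delta>0$. The gap coefficient is then at most $-\theta^2\kappa t\delta(t)$, and $\xi>0$ because $\theta>\frac1{\alpha-1}$. Integration therefore gives the two extra integral estimates.

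For the gap rate, the energy bound yields
\[
\La_\beta(x(t),\lambda^*)-\La_\beta(x^*,\lambda(t))\le \frac{C}{t^2\delta(t)},
\]
and the objective bound follows once feasibility is controlled. The lower bound uses $f(x(t))-f(x^*)\ge -\ip{\lambda^*}{Ax(t)-b}$. The upper bound uses the identity
\[
f(x)-f(x^*)=\La_\beta(x,\lambda^*)-\La_\beta(x^*,\lambda)-\ip{\lambda^*}{Ax-b}-\tfrac\beta2\|Ax-b\|^2 .
\]
Both then reduce to the feasibility estimate $\|Ax(t)-b\|=\bigO(1/(t^2\delta(t)))$.

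\textbf{Main obstacle: feasibility.} I will mimic the argument of \cite[Theorem 3.3]{HulettAMO} and of the proof of Theorem~\ref{th_ocon}$(ii)$, but with an $\bigO$ bound instead of $o$. I will set $r(t)=Ax(t)-b$. The second equation of \eqref{dy_scaled} gives
\[
\delta(t)\bigl(r(t)+\theta t\dot r(t)\bigr)=\ddot\lambda(t)+\frac{\alpha}{t}\dot\lambda(t)=t^{-\alpha}\frac{d}{dt}\bigl(t^\alpha\dot\lambda(t)\bigr).
\]
Integrating gives
\[
\int_{t_0}^t s^{\alpha}\delta(s)\bigl(r(s)+\theta s\dot r(s)\bigr)\,ds=t^\alpha\dot\lambda(t)-t_0^\alpha\dot\lambda(t_0)=\bigO(t^{\alpha-1}),
\]
using $\|\dot\lambda\|=\bigO(1/t)$. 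The left side rewrites as
\[
\int_{t_0}^t \frac{d}{ds}\bigl(\theta s^{\alpha+1}\delta(s) r(s)\bigr)ds
\]
plus a correction term. The key computation is
\[
\frac{d}{ds}\bigl(s^{1/\theta}r\bigr)=\tfrac1\theta s^{1/\theta-1}(r+\theta s\dot r).
\]
So I will instead use the weight $w(s)=\theta^{-1} s^{1/\theta-1}$ and write
\[
s^{1/\theta}r(s)=t_0^{1/\theta}r(t_0)+\int_{t_0}^s \frac{\tau^{1/\theta-1-\alpha}}{\theta\,\delta(\tau)}\,dF(\tau),\qquad F(\tau):=\tau^\alpha\dot\lambda(\tau).
\]
I will then integrate by parts and bound the result using $\|F(\tau)\|=\bigO(\tau^{\alpha-1})$. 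The derivative of the weight $\tau^{1/\theta-1-\alpha}/\delta(\tau)$ is controlled by $\bigl(|1/\theta-1-\alpha|+|\tau\dot\delta/\delta|\bigr)\tau^{1/\theta-2-\alpha}/\delta(\tau)$. This is where both bounds $\inf t\dot\delta/\delta>-\infty$ and $\sup t\dot\delta/\delta<\infty$ are used. The resulting integrand is $\bigO\bigl(\tau^{1/\theta-3}/\delta(\tau)\bigr)$. The upper bound $\tau\dot\delta/\delta\le\frac1\theta-2$ makes $\tau^{1/\theta-2}/\delta(\tau)$ nondecreasing. Hence $\int^s\tau^{1/\theta-3}/\delta\lesssim s^{1/\theta-2}/\delta(s)$, which yields $\|r(s)\|=\bigO(1/(s^2\delta(s)))$. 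The constant term $t_0^{1/\theta}r(t_0)s^{-1/\theta}$ is also dominated, by the same monotonicity. The delicate part is this monotonicity and log-derivative bookkeeping. If needed I will follow \cite[Theorem 3.3]{HulettAMO} verbatim, since the Lipschitz assumption was not used there.
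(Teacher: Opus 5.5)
Your energy argument is sound, and it is essentially what the paper does, since the paper only sketches it and defers to \cite{HulettAMO}. The following steps are all correct: the sign of $(2-\frac{1}{\theta})\delta+t\dot\delta$, the bound $\theta\int_{t_0}^{\infty} t\delta D_f\le\E^{\delta}_{z^*}(t_0)$, boundedness and $\|\dot z\|=\bigO(1/t)$ via Lemma~\ref{le_bounde}, the two extra integrals using the margin $\kappa$, the gap bound, and the reduction of the objective estimate to feasibility.

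The genuine gap is in the feasibility step. You use monotonicity of $\phi(\tau):=\tau^{1/\theta-2}/\delta(\tau)$ to infer $\int_{t_0}^s\tau^{1/\theta-3}/\delta(\tau)\,d\tau\le C\,s^{1/\theta-2}/\delta(s)$. Monotonicity only gives $\int_{t_0}^s\phi(\tau)\tau^{-1}\,d\tau\le\phi(s)\ln(s/t_0)$. Assumption~\ref{ass3} allows $\sup_t t\dot\delta/\delta=\frac{1}{\theta}-2$, for instance $\theta=\frac12$ with $\delta\equiv1$ (the $\theta=\frac12$ case of Theorem~\ref{th_basic}), or $\delta(t)=t^{1/\theta-2}$. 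There $\phi$ is constant, and your argument yields only $\|Ax(t)-b\|=\bigO(\ln t/(t^2\delta(t)))$.

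This is not slack in a crude estimate. For $\theta=\frac12$, $\delta\equiv1$, your weight is $w(\tau)=2\tau^{1-\alpha}$, and $\int_{t_0}^s w'(\tau)F(\tau)\,d\tau=2(1-\alpha)(\lambda(s)-\lambda(t_0))$ exactly. So any argument that uses only $\|\dot\lambda\|=\bigO(1/t)$ must lose the logarithm; you need boundedness of $\lambda$ itself. When $\sup_t t\dot\delta/\delta<\frac{1}{\theta}-2$ your route does work, because $\tau^{1/\theta-2-\kappa}/\delta(\tau)$ is then nondecreasing, which gives the bound with constant $1/\kappa$. Falling back on \cite{HulettAMO} verbatim is what the paper does, but the argument you actually wrote does not cover the boundary case.

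A second integration by parts in your formulation would require differentiating $t\dot\delta/\delta$, which is not available for $\delta\in C^1$. Instead, use quantities you have already shown to be bounded. Set $g(t):=(1+\xi)(\lambda(t)-\lambda^*)+\theta t\dot\lambda(t)$, which is bounded because $z$ and $t\dot z$ are. Since $1+\xi+\theta=\theta\alpha$, you get $\dot g=\theta t\bigl(\ddot\lambda+\frac{\alpha}{t}\dot\lambda\bigr)=\theta t\delta\bigl(r+\theta t\dot r\bigr)$, hence
\[
\frac{d}{dt}\bigl(t^{1/\theta}r(t)\bigr)=\frac{1}{\theta^{2}}\,\phi(t)\,\dot g(t).
\]
Integrating by parts gives
\[
t^{1/\theta}r(t)=t_0^{1/\theta}r(t_0)+\theta^{-2}\bigl(\phi(t)g(t)-\phi(t_0)g(t_0)\bigr)-\theta^{-2}\int_{t_0}^t\dot\phi(s)g(s)\,ds .
\]
Because $\dot\phi\ge0$, the last integral is bounded by $\sup\|g\|\,(\phi(t)-\phi(t_0))$. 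Hence $\|t^{1/\theta}r(t)\|\le C_1+C_2\phi(t)$. Dividing by $t^{1/\theta}=t^2\delta(t)\phi(t)$ and using $\phi(t)\ge\phi(t_0)$ gives $\|Ax(t)-b\|=\bigO(1/(t^2\delta(t)))$. This uses only the non-strict condition $\sup_t t\dot\delta/\delta\le\frac{1}{\theta}-2$; the lower bound on $t\dot\delta/\delta$ is not needed for this step.
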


Since $\delta(t)\ge \delta_0>0$, Theorem~\ref{th_scal} gives
\[
    \int_{t_0}^{+\infty}tD_f(x^*,x(t))\,dt<+\infty.
\]
Together with $\|\dot x(t)\|=\bigO(t^{-1})$ and the boundedness of the
trajectory, the same Bregman-distance argument as in Section~\ref{sec2}
yields
\[
    D_f(x^*,x(t))\to0,
    \qquad \text{as } t\to+\infty.
\]
Hence, by the finite-dimensional compactness argument and the constancy of
$\nabla f$ on the primal solution set, we obtain
$
    \nabla f(x(t))\to \nabla f(x^*).
$

We next prove the convergence of the dual image. From the first equation of
\eqref{dy_scaled} and the relation
$\nabla f(x^*)+A^\top\lambda^*=0$, one obtains
\begin{eqnarray*}
&&\theta t\frac{d}{dt}\left(A^\top(\lambda(t)-\lambda^*)\right)
+
A^\top(\lambda(t)-\lambda^*)
\\
&&\quad=
-\bigl(\nabla f(x(t))-\nabla f(x^*)\bigr)
-\beta A^\top(Ax(t)-b)
-\frac{1}{\delta(t)}
\left(\ddot x(t)+\frac{\alpha}{t}\dot x(t)\right).
\end{eqnarray*}
The first two terms on the right-hand side tend to zero by the preceding
estimates. Therefore, as in the proof of Lemma~\ref{lem:AtransposeLambda},
it remains to verify that
\begin{equation}\label{eq_new1}
\frac{1}{t^p}\int_{t_0}^{t}
\frac{s^{p-1}}{\delta(s)}
\left(\ddot x(s)+\frac{\alpha}{s}\dot x(s)\right)\,ds
\to 0,
\qquad \text{as } t\to+\infty,
\end{equation}
where $p=\frac{1}{\theta}$. Once \eqref{eq_new1} is proved, the same
integrating  argument as in Lemma~\ref{lem:AtransposeLambda} gives
\[
  A^\top\lambda(t)\to A^\top\lambda^*,
    \qquad \text{as } t\to+\infty.
\]

We now prove \eqref{eq_new1}. By integration by parts,
\[
\begin{aligned}
&\int_{t_0}^{t}
\frac{s^{p-1}}{\delta(s)}
\left(\ddot x(s)+\frac{\alpha}{s}\dot x(s)\right)\,ds  \\
&\quad =
\frac{t^{p-1}}{\delta(t)}\dot x(t)
-\frac{t_0^{p-1}}{\delta(t_0)}\dot x(t_0)
+\int_{t_0}^{t}
\frac{s^{p-2}}{\delta(s)}
\left(\alpha-p+1+\frac{s\dot\delta(s)}{\delta(s)}\right)\dot x(s)\,ds .
\end{aligned}
\]
By Assumption~\ref{ass3}, we have $\delta(s)\ge\delta_0>0$ and
$
    \left|\alpha-p+1+\frac{s\dot\delta(s)}{\delta(s)}\right|\le C
$
for some constant $C>0$. Together with $\|\dot x(t)\|=\bigO(t^{-1})$ and
$p\ge2$, this yields
\[
\begin{aligned}
&\frac{1}{t^p}
\left\|
\int_{t_0}^{t}
\frac{s^{p-1}}{\delta(s)}
\left(\ddot x(s)+\frac{\alpha}{s}\dot x(s)\right)\,ds
\right\| \\
&\quad\le
\frac{\|\dot{x}(t)\|}{\delta_0t}
+\frac{t_0^{p-1}\|\dot x(t_0)\|}{\delta(t_0)t^p}
+\frac{C}{\delta_0}\frac{1}{t^p}\int_{t_0}^{t}s^{p-2}\|\dot{x}(s)\|\,ds
\to 0.
\end{aligned}
\]
Thus \eqref{eq_new1} holds, and consequently
$
A^\top(\lambda(t)-\lambda^*)\to0.
$
Since Theorem~\ref{th_scal} also gives
$
    \|Ax(t)-b\|=\bigO(1/(t^2\delta(t))),
$
and Assumption~\ref{ass3} implies $t^2\delta(t)\to+\infty$, we have
$
    Ax(t)-b\to0.
$
Therefore, every cluster point of $(x(t),\lambda(t))$ satisfies both the
feasibility condition and the stationarity condition, and hence belongs to
$\Omega$.

Based on the above discussions, we now prove the following result.

\begin{theorem}\label{thm:scaled-convergence}
Let $(x(t),\lambda(t))$ be a global trajectory of the dynamic \eqref{dy_scaled}. Suppose
that Assumption~\ref{ass3} holds. Then there exists $(x^*,\lambda^*)\in\Omega$ such that
$
    (x(t),\lambda(t))\to(x^*,\lambda^*)
$
as $t\to+\infty$.
Moreover, if $\alpha>3$,
$\theta\in\left(\frac{1}{\alpha-1},\frac12 \right)$, and
$\sup_{t\ge t_0}\frac{t\dot\delta(t)}{\delta(t)}<\frac{1}{\theta}-2$, then
\[\norm{(\dot x(t), \dot \lambda(t))}=o\left(\frac1t\right)\] and
\[
    \|Ax(t)-b\|
    =
    o\left(\frac{1}{t^2\delta(t)}\right),\quad
    |f(x(t))-f(x^*)|
    =
    o\left(\frac{1}{t^2\delta(t)}\right).
\]
\end{theorem}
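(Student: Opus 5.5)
The proof follows the two-step scheme of Theorem~\ref{thm:main} and then the rate argument of Theorem~\ref{th_ocon}, adapted to the weight $\delta(t)$.

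\emph{Step 1: every cluster point lies in $\Omega$.} This is exactly the discussion preceding the statement. Theorem~\ref{th_scal} together with $\delta\ge\delta_0$ gives $\int_{t_0}^{+\infty} tD_f(x^*,x(t))\,dt<+\infty$. Since the trajectory is bounded and $\|\dot x(t)\|=\bigO(1/t)$, the argument of Lemma~\ref{lem:BregmanToZero} gives $D_f(x^*,x(t))\to0$. Lemma~\ref{lem:gradConv} then gives $\nabla f(x(t))\to\nabla f(x^*)$. Estimate \eqref{eq_new1} gives $A^\top\lambda(t)\to A^\top\lambda^*$, and $Ax(t)-b\to0$ follows from $t^2\delta(t)\to+\infty$.

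\emph{Step 2: there is only one cluster point.} Suppose $z_1\ne z_2$ are two cluster points; both lie in $\Omega$ by Step 1. Since $\E^\delta_{z_i}$ is nonincreasing and nonnegative, its limit exists for $i=1,2$. We take $H(t)=\theta^{-1}(\E^\delta_{z_1}(t)-\E^\delta_{z_2}(t))$. The only new term compared with \eqref{eq_con_Ht} is $\theta t^2\delta(t)\ip{\lambda_1-\lambda_2}{Ax(t)-b}$. This term vanishes identically, because $A^\top\lambda_1=A^\top\lambda_2$ (Proposition~\ref{prop_App1}) and $Ax_1=b$. Hence $(\alpha-1)h+t\dot h=H$ with the same $h$ as before. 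The l'H\^opital argument then shows that $\lim h(t)$ exists, which contradicts the two subsequential limits $\pm\frac12\|z_1-z_2\|^2$. So $z(t)\to z^*\in\Omega$.

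\emph{Rates under the strict conditions.} Fix this limit $z^*$. Because $t\|\dot z\|$ is bounded and $z(t)\to z^*$, we get $\ip{z-z^*}{t\dot z}\to0$. Expanding $\|v\|^2$, the existence of $\lim \E^\delta_{z^*}$ therefore gives that
\[
\lim_{t\to+\infty} t^2G(t)\ \text{exists},\qquad G(t):=\delta(t)\bigl(\La_\beta(x(t),\lambda^*)-\La_\beta(x^*,\lambda(t))\bigr)+\tfrac12\|\dot z(t)\|^2 .
\]
The two extra integral estimates in Theorem~\ref{th_scal} give $\int_{t_0}^{+\infty} tG(t)\,dt<\infty$. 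Lemma~\ref{lem_app_o} then yields $G=o(t^{-2})$. This gives $\|\dot z\|=o(1/t)$ and a Lagrangian gap of order $o(1/(t^2\delta))$.

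\emph{Feasibility rate.} Set $r=Ax-b$. The second equation gives $\delta(r+\theta t\dot r)=\ddot\lambda+\frac{\alpha}{t}\dot\lambda$. Hence
\[
t^{1-\alpha}\int_{t_0}^t s^\alpha\delta(s)\bigl(r(s)+\theta s\dot r(s)\bigr)\,ds=t\dot\lambda(t)-t_0^\alpha t^{1-\alpha}\dot\lambda(t_0)\to0 .
\]
We then apply Lemma~\ref{lem:tech} with this $\delta$. Presumably that lemma was stated for general $\delta$ satisfying $\sup t\dot\delta/\delta<1/\theta-2$, which is why the case $\delta\equiv1$ was used earlier. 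It gives $\|r(t)\|=o(1/(t^2\delta(t)))$.

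\emph{Objective rate.} Finally,
\[
|f(x)-f(x^*)|\le \text{gap}+\|\lambda^*\|\|r\|+\tfrac\beta2\|r\|^2 .
\]
The last term is $o(1/(t^2\delta))$ because $\|r\|\to0$.

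The main obstacle is the feasibility step. It depends on Lemma~\ref{lem:tech} holding with a nonconstant weight, with the growth condition on $\delta$ being exactly what it requires. If that lemma only covers $\delta\equiv1$, I would re-derive it. I would write $u=t^{1/\theta}\delta\, r$ and integrate by parts, using $\sup t\dot\delta/\delta<1/\theta-2$ to keep the resulting kernel positive and integrable.
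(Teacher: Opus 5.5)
Your proposal is correct and follows the paper's proof essentially step by step: cluster points lie in $\Omega$ by the preceding discussion, the energy difference (whose $\delta$-weighted term vanishes since $A^\top\lambda_1=A^\top\lambda_2$) gives uniqueness, Lemma~\ref{lem_app_o} applied to $G$ gives the rates, and Lemma~\ref{lem:tech} gives the feasibility rate. Your fallback plan is unnecessary, because Lemma~\ref{lem:tech} is already stated for a general weight $\delta$. Its hypotheses are exactly $\inf_{t\ge t_0} t\dot\delta(t)/\delta(t)>-\infty$, which Assumption~\ref{ass3} provides, and $\sup_{t\ge t_0} t\dot\delta(t)/\delta(t)<\frac1\theta-2$, which is your strict hypothesis.
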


\begin{proof}
From the above discussions, every cluster point of $z(t)$ belongs to
$\Omega$. The remaining part is the same as in the proof of
Theorem~\ref{thm:main}: if
\[
h(t)=\frac12\|z(t)-z_1\|^2-\frac12\|z(t)-z_2\|^2,
\]
then the existence of the energy-difference limit implies the existence of
$\lim_{t\to+\infty}h(t)$. Passing to the limits along two subsequences
converging to $z_1$ and $z_2$, respectively, yields $z_1=z_2$. Hence the
trajectory has a unique cluster point. Since it is bounded in a
finite-dimensional space, it converges to some
$(x^*,\lambda^*)\in\Omega$.

Moreover, if $\alpha>3$,
$\theta\in\left(\frac{1}{\alpha-1},\frac12 \right)$, and
$\sup_{t\ge t_0}\frac{t\dot\delta(t)}{\delta(t)}<\frac{1}{\theta}-2$, then
Theorem~\ref{th_scal} gives
\[
\int_{t_0}^{+\infty} t\norm{(\dot x(t),\dot \lambda(t))}^2\,dt<+\infty,
\quad
\int_{t_0}^{+\infty} t\delta(t)
(\La_\beta(x(t),\lambda^*)-\La_\beta(x^*,\lambda(t)))\,dt<+\infty.
\]
By arguments similar to those in Theorem~\ref{th_ocon}, we have
\[
\norm{(\dot x(t), \dot \lambda(t))}=o\left(\frac1t\right),
\quad
\La_\beta(x(t),\lambda^*)-\La_\beta(x^*,\lambda(t))
=o\left(\frac{1}{t^2\delta(t)}\right).
\]
Multiplying both sides of the second equation of \eqref{dy_scaled} by
$t^\alpha$ and integrating over $[t_0,t]$ yields
\[
    \frac{1}{t^{\alpha-1}}
    \int_{t_0}^{t}s^\alpha\delta(s)
    \bigl(Ax(s)-b+\theta sA\dot x(s)\bigr)\,ds
    =
    t\dot\lambda(t)
    -
    \frac{t_0^\alpha}{t^{\alpha-1}}\dot\lambda(t_0).
\]
Since $\|\dot\lambda(t)\|=o(t^{-1})$, the right-hand side tends to zero.
Applying Lemma~\ref{lem:tech} with $r(t)=Ax(t)-b$ gives
\[
    \|Ax(t)-b\|= o\left(\frac{1}{t^2\delta(t)}\right).
\]
Then
\[
    |f(x(t))-f(x^*)|
    =
    o\left(\frac{1}{t^2\delta(t)}\right).
\]
The proof is complete.
\end{proof}

\begin{remark}
Compared with \cite{HulettAMO}, Theorem~\ref{thm:scaled-convergence} shows
that, in finite-dimensional spaces, the trajectory convergence of the
time-rescaled Nesterov accelerated  primal-dual dynamic can be obtained without assuming
Lipschitz continuity of $\nabla f$. In addition, the theorem improves the
existing $\bigO(1/(t^2\delta(t)))$ estimates in
\cite{HulettAMO,AttouchJOTA,HeAA} to
$o(1/(t^2\delta(t)))$ rates. When $A=0$ and $b=0$, the results reduce to the
corresponding convergence properties of the time-scaled
$\text{\rm (AVD)}_{\alpha}$ dynamic \cite{AttouchSIOPT}.
\end{remark}

\section{Conclusions and Perspectives}\label{sec5}

In this paper, we studied a Nesterov accelerated inertial primal-dual
dynamical system \eqref{dy_main} for the linearly constrained convex optimization problem \eqref{main_ques}. In a
finite-dimensional space setting, we proved the convergence of the whole trajectory
to a primal-dual solution under $\alpha\ge3$, without assuming the Lipschitz
continuity of $\nabla f$. This extends the trajectory convergence result of
Bo\c{t} and Nguyen \cite{BotJDE}, where the analysis requires Lipschitz
continuity of $\nabla f$ and the noncritical condition $\alpha>3$. The main tool is a Bregman-distance estimate, which replaces the usual
cocoercivity-based gradient residual estimate in the Lipschitz-gradient
setting.  We also derived improved convergence rates in the noncritical regime
$\alpha>3$ and $\theta\in(1/(\alpha-1),1/2)$. More precisely, the objective
residual and feasibility violation decay as $o(t^{-2})$, while the velocity
satisfies $o(t^{-1})$. These results improve the known $\bigO(t^{-2})$
estimates for related Nesterov-type primal-dual dynamics
\cite{ZengTAC,HeSICON,AttouchJOTA,BotJDE}, and reduce in the unconstrained
case to the $o(t^{-2})$ convergence behavior of
$\text{\rm (AVD)}_{\alpha}$ \cite{MayTJM}. We also indicated that the same
strategy applies to  time-scaled Nesterov accelerated primal-dual dynamic \eqref{dy_scaled} in
\cite{HulettAMO,HeAA} 

Future work includes the design of structure-preserving discretization that
retain the main continuous-time properties at the algorithmic level, including
 convergence of iterate and improved $o(k^{-2})$ rates. It would also
be interesting to investigate whether the approach can be extended to
composite objectives, nonsmooth terms through proximal dynamics, saddle point
problems, and primal-dual dynamical systems with more general damping, scaling, or
extrapolation coefficients. Another related direction is to understand which
parts of the present finite-dimensional argument can be carried over to
Hilbert spaces under suitable 
assumptions.

\appendix

\section{Auxiliary results}\label{AppA}
\setcounter{equation}{0}
\renewcommand{\theequation}{A.\arabic{equation}}
\renewcommand{\thesection}{\Alph{section}}

We collect here some auxiliary results used in the proof of the convergence
properties of the dynamical system \eqref{dy_main}.

\begin{proposition}\label{prop_App1}
Consider the linearly constrained problem \eqref{main_ques}, where
$f:\R^n\to\R$ is convex and differentiable. Then, for any
$(x_1^*,\lambda_1^*), (x_2^*,\lambda_2^*)\in \Omega$, one has
\[
\nabla f(x_1^*)=\nabla f(x_2^*)\quad \text{and} \quad
A^\top\lambda_1^*=A^\top\lambda_2^*.
\]
\end{proposition}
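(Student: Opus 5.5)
We use the KKT characterization of $\Omega$ together with the monotonicity of $\nabla f$. Take $(x_1^*,\lambda_1^*),(x_2^*,\lambda_2^*)\in\Omega$. By definition of $\Omega$,
\[
Ax_1^*=Ax_2^*=b,\qquad \nabla f(x_i^*)=-A^\top\lambda_i^*\quad (i=1,2).
\]
Hence
\[
\ip{\nabla f(x_1^*)-\nabla f(x_2^*)}{x_1^*-x_2^*}
=-\ip{\lambda_1^*-\lambda_2^*}{A(x_1^*-x_2^*)}=0,
\]
because $A(x_1^*-x_2^*)=b-b=0$. So the gradients at the two points have zero inner product with the difference of the points.

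The second step upgrades this orthogonality to equality of the gradients, which is the only real obstacle: monotonicity alone gives zero inner product, not equal vectors. We use the Bregman distance \eqref{eq_Df} and work only with convexity and differentiability, with no Lipschitz assumption.

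1. Write the two convexity inequalities
\[
f(x_2^*)\ge f(x_1^*)+\ip{\nabla f(x_1^*)}{x_2^*-x_1^*},\qquad
f(x_1^*)\ge f(x_2^*)+\ip{\nabla f(x_2^*)}{x_1^*-x_2^*}.
\]
2. Their sum is $D_f(x_2^*,x_1^*)+D_f(x_1^*,x_2^*)=\ip{\nabla f(x_1^*)-\nabla f(x_2^*)}{x_1^*-x_2^*}$. By the first step this sum is zero.
3. Both terms are nonnegative, so both vanish. In particular
\[
f(x_1^*)=f(x_2^*)+\ip{\nabla f(x_2^*)}{x_1^*-x_2^*}.
\]
4. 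Combine this with convexity at $x_2^*$, namely $f(y)\ge f(x_2^*)+\ip{\nabla f(x_2^*)}{y-x_2^*}$. Subtracting gives $f(y)\ge f(x_1^*)+\ip{\nabla f(x_2^*)}{y-x_1^*}$ for all $y\in\R^n$, so $\nabla f(x_2^*)\in\partial f(x_1^*)$.
5. Since $f$ is differentiable at $x_1^*$, we have $\partial f(x_1^*)=\{\nabla f(x_1^*)\}$. Therefore $\nabla f(x_1^*)=\nabla f(x_2^*)$. This is the same mechanism as in Lemma \ref{lem:gradConv}.

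Finally, the KKT relations give $A^\top\lambda_1^*=-\nabla f(x_1^*)=-\nabla f(x_2^*)=A^\top\lambda_2^*$, which completes the proof.
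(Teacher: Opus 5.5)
Your proof is correct, but it reaches the key orthogonality relation by a different route than the paper.

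The paper does not use the multipliers for the gradient claim. It first notes that $x_1^*,x_2^*$ are both minimizers, implicitly using that KKT points of this convex problem are optimal. It then observes that $f$ is constant, equal to the optimal value, on the feasible segment between them. Differentiating $\phi(s)=f(x_1^*+s(x_2^*-x_1^*))$ at $s=0$ gives $\langle\nabla f(x_1^*),x_2^*-x_1^*\rangle=0$, and together with $f(x_1^*)=f(x_2^*)$ this yields $\nabla f(x_1^*)\in\partial f(x_2^*)$.

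You instead obtain $\langle\nabla f(x_1^*)-\nabla f(x_2^*),x_1^*-x_2^*\rangle=-\langle\lambda_1^*-\lambda_2^*,A(x_1^*-x_2^*)\rangle=0$ directly from the KKT relations. You then split this quantity into two nonnegative Bregman distances, so both vanish. The final subdifferential-uniqueness step is the same in both proofs.

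Your version is shorter and purely algebraic: it needs no optimal values, no one-sided derivative, and no appeal to the sufficiency of KKT conditions. It does, however, rely on the multipliers. The paper's gradient argument uses only $x_1^*,x_2^*\in\mathcal S$, so it directly shows that $\nabla f$ is constant on the primal solution set. In a Hilbert-space setting that remains meaningful even where a multiplier need not exist. In the present finite-dimensional setting with affine constraints, every $x^*\in\mathcal S$ admits a multiplier, so the two conclusions coincide.
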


\begin{proof}
Since $(x_i^*,\lambda_i^*)\in \Omega$, $i=1,2$, the points $x_1^*$ and
$x_2^*$ are both optimal solutions. Denote the optimal value by $f^*$. Then
\[
f(x_1^*)=f(x_2^*)=f^*,\qquad Ax_1^*=Ax_2^*=b.
\]
For any $s\in[0,1]$, set $x_s=(1-s)x_1^*+sx_2^*$. Since the constraint is
affine, $Ax_s=b$. By the convexity of $f$,
\[
f(x_s)\le (1-s)f(x_1^*)+sf(x_2^*)=f^*.
\]
Together with $Ax_s=b$, this implies
\[
f(x_s)=f^*,\qquad \forall s\in[0,1].
\]
Therefore, the function
\[
\phi(s)=f(x_1^*+s(x_2^*-x_1^*))
\]
is constant on $[0,1]$. Since $f$ is differentiable,
\begin{equation}\label{eq_app1}
0=\phi'_+(0)
=
\left\langle \nabla f(x_1^*),x_2^*-x_1^*\right\rangle .
\end{equation}
By convexity, for every $y\in\R^n$,
$
f(y)\ge f(x_1^*)+
\left\langle \nabla f(x_1^*),y-x_1^*\right\rangle .
$
Using \eqref{eq_app1} and $f(x_1^*)=f(x_2^*)$, we get
\[
f(y)\ge f(x_2^*)+
\left\langle \nabla f(x_1^*),y-x_2^*\right\rangle ,
\qquad \forall y\in\R^n.
\]
Thus 
$
\nabla f(x_1^*)=\nabla f(x_2^*).
$

The assumption $(x_i^*,\lambda_i^*)\in \Omega$ gives
\[
A^\top\lambda_i^*=-\nabla f(x_i^*),\qquad i=1,2.
\]
Therefore,
$
A^\top\lambda_1^*=A^\top\lambda_2^*.
$
\end{proof}

\begin{remark}
The argument of Proposition~\ref{prop_App1} extends directly to real Hilbert
spaces when $f:\mathcal{H}\to\mathbb R$ is convex and Fr\'echet
differentiable. The proof only uses the affine feasibility of the segment
between two primal solutions and the gradient inequality in Hilbert spaces.
Hence the conclusion of \cite[Proposition A.4]{BotJDE} does not require the
Lipschitz continuity of $\nabla f$.
\end{remark}

\begin{lemma}\cite[Lemma 1]{HeNN}\label{le_bounde}
Assume that $z:[t_0,+\infty)\to \mathbb{R}^N$ is continuously
differentiable, $g:[t_0,+\infty)\to(0,+\infty)$ is continuously
differentiable, $\bar z\in \mathbb{R}^N$, $t_0>0$, and $C>0$. If
\[
    \left\|z(t)-\bar z+{g(t)}\dot{z}(t)\right\|^2\leq C,
\]
then $z(t)$ is bounded on $[t_0,+\infty)$ and
$\sup_{t\geq t_0}\left\|{g(t)}\dot{z}(t)\right\|<+\infty$.
\end{lemma}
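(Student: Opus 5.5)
The plan is to reduce the statement to a scalar inequality for a function $w(t)\ge0$. Since $z(t)$ may take values in $\mathbb{R}^N$, I first form the auxiliary function
\[
u(t):=\tfrac12\norm{z(t)-\bar z}^2 .
\]
Then $\dot u(t)=\ip{z(t)-\bar z}{\dot z(t)}$. Expanding the hypothesis gives
\[
2u(t)+2g(t)\dot u(t)+g(t)^2\norm{\dot z(t)}^2\le C .
\]
Dropping the nonnegative last term yields the differential inequality
\[
u(t)+g(t)\dot u(t)\le \tfrac{C}{2}, \qquad t\ge t_0 .
\]

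Next I integrate this inequality with the integrating factor
\[
\Gamma(t):=\exp\Bigl(\int_{t_0}^{t}\frac{ds}{g(s)}\Bigr)>0 .
\]
Dividing by $g(t)>0$ and multiplying by $\Gamma(t)$ gives
\[
\frac{d}{dt}\bigl(\Gamma(t)u(t)\bigr)\le \frac{C}{2}\,\frac{\Gamma(t)}{g(t)}=\frac{C}{2}\,\dot\Gamma(t).
\]
Integrating over $[t_0,t]$ and using $\Gamma(t_0)=1$ gives
\[
\Gamma(t)u(t)\le u(t_0)+\tfrac{C}{2}\bigl(\Gamma(t)-1\bigr).
\]
Since $\Gamma(t)\ge1$, it follows that
\[
u(t)\le \frac{u(t_0)}{\Gamma(t)}+\frac C2\le u(t_0)+\frac C2 .
\]
Hence $\norm{z(t)-\bar z}$ is uniformly bounded on $[t_0,+\infty)$, so $z(t)$ is bounded.

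For the velocity bound I use the triangle inequality:
\[
\norm{g(t)\dot z(t)}\le \norm{z(t)-\bar z+g(t)\dot z(t)}+\norm{z(t)-\bar z}\le \sqrt C+\sqrt{2u(t_0)+C}.
\]
This gives $\sup_{t\ge t_0}\norm{g(t)\dot z(t)}<+\infty$.

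The only delicate point is that no growth condition on $g$ is assumed. This is why I use the general integrating factor $\Gamma$ rather than a power of $t$. The estimate must not require $\Gamma(t)\to\infty$; it only uses $\Gamma\ge1$, which holds because $g>0$. Continuity of $1/g$, from $g\in C^1$ with $g>0$, makes $\Gamma$ well defined and $C^1$, so the differentiation step is legitimate.
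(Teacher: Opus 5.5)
The paper gives no proof of this lemma; it is quoted from \cite[Lemma 1]{HeNN}, so there is no in-paper argument to compare against. Your proof is correct and complete.

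Expanding the square and discarding $g(t)^2\norm{\dot z(t)}^2$ gives $u+g\dot u\le C/2$. The integrating factor $\Gamma(t)=\exp\bigl(\int_{t_0}^t ds/g(s)\bigr)$ turns this into $\frac{d}{dt}(\Gamma u)\le \frac C2\dot\Gamma$. Using only $\Gamma\ge 1$ then yields $u(t)\le u(t_0)+C/2$, and the triangle inequality bounds $g\dot z$. Your remark that no growth condition on $g$ is needed is right.

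A slightly more direct variant works at the vector level, without squaring. Since $\dot\Gamma=\Gamma/g$, one has
\[
\frac{d}{dt}\bigl(\Gamma(t)(z(t)-\bar z)\bigr)=\dot\Gamma(t)\bigl(z(t)-\bar z+g(t)\dot z(t)\bigr).
\]
Integrating and taking norms gives
\[
\norm{z(t)-\bar z}\le \frac{\norm{z(t_0)-\bar z}}{\Gamma(t)}+\sqrt C\Bigl(1-\frac{1}{\Gamma(t)}\Bigr)\le \norm{z(t_0)-\bar z}+\sqrt C.
\]
This route never discards the term $g^2\norm{\dot z}^2$. When $\Gamma(t)\to+\infty$, as for $g(t)=\theta t$ in Theorem~\ref{th_basic}, it also shows by a Ces\`aro argument that $z(t)-\bar z$ inherits the limit of $z(t)-\bar z+g(t)\dot z(t)$ whenever the latter converges. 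Your scalar route via $u=\frac12\norm{z-\bar z}^2$ yields only boundedness, which is exactly what the lemma asserts and what the paper uses.
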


\begin{lemma}\label{lem_app_o}
Let $g:[t_0,+\infty)\to[0,+\infty)$ be a continuous function such that
$\lim_{t\to+\infty}t^2g(t)$ exists and
$\int_{t_0}^{+\infty}tg(t)\,dt<+\infty$. Then
\[
    g(t)=o(t^{-2}).
\]
\end{lemma}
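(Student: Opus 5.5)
Set $L:=\lim_{t\to+\infty}t^2g(t)$, which exists by hypothesis and satisfies $L\ge0$ because $g\ge0$. Since $g(t)=o(t^{-2})$ means precisely $t^2g(t)\to0$, it suffices to show $L=0$. I will argue by contradiction, comparing the tail behaviour of $tg(t)$ with the divergent integral $\int^{+\infty}\frac{dt}{t}$.

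Suppose $L>0$. By the definition of the limit, there is $T\ge t_0$ (we may take $T>0$ since $t_0>0$ in the paper's setting) such that
\[
t^2g(t)\ge \frac{L}{2}\qquad \text{for all } t\ge T .
\]
This gives $tg(t)\ge \frac{L}{2t}$ for all $t\ge T$. Since $g$ is continuous and nonnegative, we can compare integrals:
\[
\int_{t_0}^{+\infty}tg(t)\,dt\ \ge\ \int_{T}^{+\infty}tg(t)\,dt\ \ge\ \frac{L}{2}\int_T^{+\infty}\frac{dt}{t}=+\infty .
\]
This contradicts $\int_{t_0}^{+\infty}tg(t)\,dt<+\infty$. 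Hence $L=0$, that is, $t^2g(t)\to0$, and so $g(t)=o(t^{-2})$.

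There is no real obstacle here. The only points that need care are to note that the limit is finite and nonnegative (because $g\ge0$), which rules out $L=+\infty$ or a negative value, and that continuity of $g$ makes the integrals well defined.
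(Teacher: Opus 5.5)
Your proposal is correct and follows essentially the same argument as the paper: set $\ell=\lim_{t\to+\infty}t^2g(t)\ge0$, assume $\ell>0$ to get $tg(t)\ge \ell/(2t)$ on a tail $[T,+\infty)$, and contradict the integrability of $tg$ through the divergence of $\int_T^{+\infty}dt/t$. One small remark: even if the limit were allowed to be $+\infty$, the same comparison (with $t^2g(t)\ge1$ eventually) would still give the contradiction, so finiteness of the limit is not actually needed.
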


\begin{proof}
Let
$
\ell:=\lim_{t\to+\infty}t^2g(t).
$
Since $g(t)\ge0$, we have $\ell\ge0$. If $\ell>0$, then there exists
$T\ge t_0$ such that $t^2g(t)\ge \ell/2$ for all $t\ge T$. Hence
$tg(t)\ge \ell/(2t)$ for $t\ge T$, and consequently
\[
    \int_T^{+\infty}tg(t)\,dt
    \ge
    \frac{\ell}{2}\int_T^{+\infty}\frac{dt}{t}
    =
    +\infty,
\]
which contradicts $\int_{t_0}^{+\infty}tg(t)\,dt<+\infty$. Therefore
$\ell=0$, that is, $t^2g(t)\to0$. Hence $g(t)=o(t^{-2})$.
\end{proof}

The following lemma is a weighted Ces\`aro-type estimate. It can be viewed
as a continuous analogue of the classical Toeplitz theorem for summability
methods.

\begin{lemma}\label{lem:cesaro}
Let $u:[t_0,+\infty)\to[u_0,+\infty)$ be locally integrable with $t_0>0$ and
$u_0>0$. Define
\[
    R(t):=\exp\left(\int_{t_0}^{t}\frac{u(s)}{s}\,ds\right),
    \qquad t\ge t_0.
\]
Let $h:[t_0,+\infty)\to\mathbb R^m$ be locally integrable and satisfy
$\lim_{t\to+\infty}\|h(t)\|=0$. Then
\[
    \frac{1}{R(t)}
    \int_{t_0}^{t}\frac{R(s)}{s}h(s)\,ds
    \to0
    \qquad \text{as } t\to+\infty.
\]
\end{lemma}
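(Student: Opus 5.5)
The plan is to reproduce the classical continuous Toeplitz argument: split the integral at a large time $T$, use the smallness of $h$ on $[T,t]$, and use $R(t)\to+\infty$ to kill the contribution of $[t_0,T]$. The basic identity driving everything is that $R$ is absolutely continuous with $\dot R(s)=\frac{u(s)}{s}R(s)$ almost everywhere. Since $u\ge u_0>0$, this gives
\[
\frac{R(s)}{s}\le \frac{1}{u_0}\dot R(s)\quad\text{a.e.}
\]

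\textbf{Growth of $R$.} First I will note that
\[
R(t)\ge \exp\left(u_0\ln\frac{t}{t_0}\right)=\left(\frac{t}{t_0}\right)^{u_0}\to+\infty .
\]
This is the only place where the lower bound $u_0>0$ is essential for divergence.

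\textbf{Splitting the integral.} Fix $\varepsilon>0$ and choose $T\ge t_0$ with $\|h(s)\|\le \varepsilon u_0$ for $s\ge T$. For $t\ge T$, I split the integral at $T$:
\[
\left\|\frac{1}{R(t)}\int_{t_0}^{t}\frac{R(s)}{s}h(s)\,ds\right\|\le \frac{1}{R(t)}\int_{t_0}^{T}\frac{R(s)}{s}\|h(s)\|\,ds+\frac{\varepsilon u_0}{R(t)}\int_{T}^{t}\frac{R(s)}{s}\,ds .
\]

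\textbf{The two pieces.} For the first piece, the integral over $[t_0,T]$ is a finite constant, because $R$ is continuous hence bounded on $[t_0,T]$, $1/s\le 1/t_0$, and $h$ is locally integrable. Dividing by $R(t)\to+\infty$, this piece tends to $0$. For the second piece, the bound $R(s)/s\le \dot R(s)/u_0$ gives
\[
\int_T^t \frac{R(s)}{s}\,ds\le \frac{R(t)-R(T)}{u_0}\le \frac{R(t)}{u_0},
\]
so the second term is at most $\varepsilon$. Taking $\limsup_{t\to+\infty}$ yields a bound of $\varepsilon$, and since $\varepsilon>0$ was arbitrary, the limit is $0$.

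\textbf{Main point of care.} The only step needing attention is justifying $\int_T^t \dot R=R(t)-R(T)$ when $u$ is merely locally integrable. This holds because $s\mapsto\int_{t_0}^s u(\tau)/\tau\,d\tau$ is absolutely continuous, and composing it with the locally Lipschitz function $\exp$ keeps it absolutely continuous. The fundamental theorem of calculus for absolutely continuous functions then applies.
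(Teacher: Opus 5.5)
Your proposal is correct and follows essentially the same argument as the paper: you split at $T$, kill the $[t_0,T]$ piece using $R(t)\ge (t/t_0)^{u_0}\to+\infty$, and bound the tail via $R(s)/s\le \dot R(s)/u_0$. The differences are only cosmetic: you choose $\|h\|\le\varepsilon u_0$ instead of $\varepsilon$, and you justify $\int_T^t\dot R=R(t)-R(T)$ by absolute continuity, a point the paper passes over when $u$ is merely locally integrable.
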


\begin{proof}
Since $\lim_{t\to+\infty}\|h(t)\|=0$, for any $\varepsilon>0$, there exists
$T\ge t_0$ such that $\|h(t)\|\le\varepsilon$ for all $t\ge T$. Hence, for
any $t\ge T$, by the triangle inequality for vector-valued integrals,
\begin{equation}\label{eq_RR}
\begin{aligned}
\left\|
\frac{1}{R(t)}
\int_{t_0}^{t}\frac{R(s)}{s}h(s)\,ds
\right\|
&\le
\frac{1}{R(t)}
\left\|
\int_{t_0}^{T}\frac{R(s)}{s}h(s)\,ds
\right\|
+
\frac{\varepsilon}{R(t)}
\int_T^t\frac{R(s)}{s}\,ds .
\end{aligned}
\end{equation}

Since $u(t)\ge u_0>0$, we have
$
    R(t)\ge
    \exp\left(\int_{t_0}^{t}\frac{u_0}{s}\,ds\right)
    =
    \left(\frac{t}{t_0}\right)^{u_0},
$
and consequently $R(t)\to+\infty$ as $t\to+\infty$. Therefore,
\[
\lim_{t\to+\infty}
\frac{1}{R(t)}
\left\|
\int_{t_0}^{T}\frac{R(s)}{s}h(s)\,ds
\right\|
=0.
\]
Moreover, by the definition of $R(t)$, we have
$
    \frac{dR(t)}{dt}
    =
    {u(t)R(t)}/{t}
    \ge
    {u_0R(t)}/{t}.
$
Thus,
\[
    \frac{1}{R(t)}\int_T^t\frac{R(s)}{s}\,ds
    \le
    \frac{1}{u_0R(t)}\int_T^t \frac{dR(s)}{ds}\,ds
    =
    \frac{R(t)-R(T)}{u_0R(t)}
    \le
    \frac{1}{u_0}.
\]
Combining this estimate with \eqref{eq_RR}, we obtain
\[
\limsup_{t\to+\infty}
\left\|
\frac{1}{R(t)}
\int_{t_0}^{t}\frac{R(s)}{s}h(s)\,ds
\right\|
\le
\frac{\varepsilon}{u_0}.
\]
Since $\varepsilon>0$ is arbitrary, the desired conclusion follows.
\end{proof}

\begin{lemma}\label{lem:tech}
Let $\alpha>1$, $\theta\in(0,1/2)$ $t_0>0$, and let
$\delta:[t_0,+\infty)\to(0,+\infty)$ be continuously differentiable and
satisfy
\[
   \inf_{t\ge t_0}\frac{t\dot\delta(t)}{\delta(t)} >-\infty,
   \qquad
   \sup_{t\ge t_0}\frac{t\dot\delta(t)}{\delta(t)}<\frac{1}{\theta}-2.
\]
Let $r:[t_0,+\infty)\to\mathbb R^m$ be continuously differentiable and
define
\[
M(t):=
\frac{1}{t^{\alpha-1}}
\int_{t_0}^{t}s^\alpha\delta(s)
\bigl(r(s)+\theta s\dot r(s)\bigr)\,ds .
\]
If $M(t)\to0$ as $t\to+\infty$, then
$
    \lim_{t\to+\infty} \|t^2\delta(t)r(t)\|=0.
$
\end{lemma}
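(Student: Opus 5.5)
The plan is to turn the integral condition on $M(t)$ into a first-order linear differential relation for the weighted quantity
\[
q(t):=t^2\delta(t)r(t),
\]
then solve it with the integrating factor $R(t)$ of Lemma~\ref{lem:cesaro} and conclude with that lemma. Throughout, set $p:=1/\theta$ and
\[
u(t):=p-2-\frac{t\dot\delta(t)}{\delta(t)}.
\]
By the two hypotheses on $t\dot\delta/\delta$, $u$ is continuous and bounded, and $u(t)\ge u_0:=p-2-\sup_{s\ge t_0}\frac{s\dot\delta(s)}{\delta(s)}>0$. So $u$ is admissible in Lemma~\ref{lem:cesaro}, and $R(t)=\exp\bigl(\int_{t_0}^t u(s)/s\,ds\bigr)$ satisfies $\dot R=uR/t$ and $R(t)\to+\infty$.

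\emph{Step 1 (differential identity).} Let $N(t):=t^{\alpha-1}M(t)$, so that $\dot N(t)=t^{\alpha}\delta(t)\bigl(r(t)+\theta t\dot r(t)\bigr)$. Differentiating $q$ gives $\dot q=\bigl(\tfrac2t+\tfrac{\dot\delta}{\delta}\bigr)q+t^2\delta\dot r$. Eliminating $t^2\delta\dot r$ yields
\[
\dot N(t)=\theta t^{\alpha-2}\bigl(t\dot q(t)+u(t)q(t)\bigr).
\]
Equivalently, $\frac{d}{dt}\bigl(R(t)q(t)\bigr)=\frac1\theta R(t)t^{1-\alpha}\dot N(t)$.

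\emph{Step 2 (integration and integration by parts).} Integrate over $[t_0,t]$. Then integrate the right-hand side by parts, writing $N=s^{\alpha-1}M$ and using
\[
\frac{d}{ds}\bigl(R(s)s^{1-\alpha}\bigr)=\frac{R(s)}{s}\bigl(u(s)-(\alpha-1)\bigr)s^{1-\alpha}.
\]
After dividing by $R(t)$ this gives
\[
q(t)=\frac{R(t_0)\bigl(q(t_0)-M(t_0)/\theta\bigr)}{R(t)}+\frac{M(t)}{\theta}-\frac{1}{\theta R(t)}\int_{t_0}^t\frac{R(s)}{s}\bigl(u(s)-\alpha+1\bigr)M(s)\,ds .
\]

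\emph{Step 3 (limits).} Consider the three terms on the right. The first tends to $0$ because $R(t)\to+\infty$. The second tends to $0$ by hypothesis. For the third, set $h(s):=(u(s)-\alpha+1)M(s)$; since $u$ is bounded and $M\to0$, we have $h\to0$, so Lemma~\ref{lem:cesaro} shows the third term vanishes. Hence $q(t)\to0$, which is the claim $\|t^2\delta(t)r(t)\|\to0$.

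The main obstacle is Step 1: finding the correct combination so that the coefficient multiplying $q$ is exactly $u=\frac1\theta-2-\frac{t\dot\delta}{\delta}$. This is precisely where the strict inequality $\sup_t t\dot\delta/\delta<\frac1\theta-2$ is used, through $u\ge u_0>0$. The lower bound $\inf_t t\dot\delta/\delta>-\infty$ enters only to keep $u$ bounded, which Step 3 needs for $h\to0$. The remaining manipulations are routine.
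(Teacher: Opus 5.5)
Your proposal is correct and follows essentially the same route as the paper: the same substitution $q=t^2\delta r$, the same integrating factor $R$ built from $u=\frac1\theta-2-\frac{t\dot\delta}{\delta}$, and Lemma~\ref{lem:cesaro} to finish (your coefficient $u-\alpha+1$ equals the paper's $a(t)/\theta$). The only difference is bookkeeping: the paper integrates by parts inside $M$ first and solves an ODE for an auxiliary $Q$, whereas you integrate by parts after applying $R$, which folds the initial data into $q(t_0)/R(t)$ and shows that the hypothesis $\alpha>1$ is not actually needed.
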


\begin{proof}
Set
\begin{equation}\label{eq_defy}
    y(t):=t^2\delta(t)r(t),
    \qquad
    \rho(t):=\frac{t\dot\delta(t)}{\delta(t)}.
\end{equation}
Then $r(t)=y(t)/(t^2\delta(t))$ and
\[
    \dot r(t)=\frac{\dot y(t)}{t^2\delta(t)}
    -
    \left(\frac{2}{t}+\frac{\dot\delta(t)}{\delta(t)}\right)
    \frac{y(t)}{t^2\delta(t)}.
\]
Hence
\[
r(t)+\theta t\dot r(t)
=
\frac{1-2\theta-\theta\rho(t)}{t^2\delta(t)}y(t)
+
\frac{\theta}{t\delta(t)}\dot y(t).
\]
It follows that
\[
M(t)
=
\frac{1}{t^{\alpha-1}}
\left[
\int_{t_0}^{t}s^{\alpha-2}
\bigl(1-2\theta-\theta\rho(s)\bigr)y(s)\,ds
+
\theta\int_{t_0}^{t}s^{\alpha-1}\dot y(s)\,ds
\right].
\]
Integrating by parts in the second integral gives
\begin{equation}\label{eq_My}
M(t)=
\theta y(t)
-\theta\left(\frac{t_0}{t}\right)^{\alpha-1}y(t_0)
+Q(t),
\end{equation}
where
\[
Q(t):=
\frac{1}{t^{\alpha-1}}
\int_{t_0}^{t}s^{\alpha-2}
\bigl(1-\theta(\alpha+1)-\theta\rho(s)\bigr)y(s)\,ds .
\]
Differentiating $Q(t)$ gives
\begin{equation}\label{eq_dotQ}
\dot Q(t)
=
\frac{1-\theta(\alpha+1)-\theta\rho(t)}{t}y(t)
-
\frac{\alpha-1}{t}Q(t).
\end{equation}
For simplicity, define
\begin{equation}\label{eq_pa_def}
p(t):=\frac{1}{\theta}M(t)
+\left(\frac{t_0}{t}\right)^{\alpha-1}y(t_0),
\qquad
a(t):=1-\theta(\alpha+1)-\theta\rho(t).
\end{equation}
Then \eqref{eq_My} yields
\begin{equation}\label{eq_ytt}
    y(t)=p(t)-\frac{1}{\theta}Q(t).
\end{equation}
Substituting \eqref{eq_ytt} into \eqref{eq_dotQ}, we obtain
\begin{equation}\label{eq_qdott}
    \dot Q(t)+\frac{u(t)}{t}Q(t)=\frac{a(t)}{t}p(t),
\end{equation}
where
\[
    u(t):=\frac1\theta-2-\rho(t).
\]

By the assumptions on $\rho(t)$ defined in \eqref{eq_defy}, there exists $u_0>0$ such that
$u(t)\ge u_0$ for all $t\ge t_0$. Define
$
    R(t):=\exp\left(\int_{t_0}^{t}\frac{u(s)}{s}\,ds\right).
$
Then $ \dot{R}(t)=\frac{R(t)u(t)}{t}$.
Multiplying both sides of \eqref{eq_qdott} by $R(t)$, we get
\[
    \frac{d}{dt}(R(t)Q(t))
    =
    \frac{R(t)}{t}a(t)p(t).
\]
Integrating over $[t_0,t]$ and using $Q(t_0)=0$, we obtain
\[
Q(t)=
\frac{1}{R(t)}
\int_{t_0}^{t}
\frac{R(s)}{s}a(s)p(s)\,ds .
\]

By assumption, $M(t)\to0$. Since $\alpha>1$, we also have
$\left(t_0/t\right)^{\alpha-1}y(t_0)\to0$. Hence, from
\eqref{eq_pa_def},
$
    \lim_{t\to+\infty} \|p(t)\| = 0.
$
Moreover, the assumptions on $\rho(t)$ imply that $\rho(t)$ is bounded, and
therefore $a(t)$ is bounded. Consequently,
$
    \lim_{t\to+\infty} \|a(t)p(t)\| = 0.
$
Applying Lemma~\ref{lem:cesaro} with $h(t)=a(t)p(t)$ yields
\[
    Q(t)\to 0
    \qquad \text{as } t\to+\infty.
\]
Finally, from \eqref{eq_ytt}, we obtain
$
    \lim_{t\to+\infty}\|y(t)\|=0.
$
Together with \eqref{eq_defy}, this implies the result.
\end{proof}

\bibliographystyle{siam}

\bibliography{references}

\end{document}